\documentclass[11pt,twoside,reqno,centertags]{amsart}
\usepackage{amsfonts}
\usepackage{color,enumitem,graphicx}
\usepackage{subfigure}
\usepackage[colorlinks=true,urlcolor=blue,
citecolor=red,linkcolor=blue,linktocpage,pdfpagelabels,
bookmarksnumbered,bookmarksopen]{hyperref}
 % \def\publname{}
 % \def\volinfo{\tiny
%Advances in Differential Equations
  %  \hfill
%Volume 9, Number xxx,  xxx, 2004}
%\def\pageinfo{Pages xx--xx}
 % \PII{}
%\copyrightinfo{}{}

\setcounter{page}{1}
  \usepackage{amsmath,amsthm,amsfonts,amssymb}

  \pagestyle{myheadings}
\thispagestyle{empty}
    \textwidth = 6 true in
    \textheight = 9.    true in

  \voffset= -20 true pt
  \oddsidemargin=0true in
  \evensidemargin=0true in

\begin{document}
\title{ Isolated singularities of positive solutions for Choquard equations in sublinear  case    }
\date{}
\maketitle

\vspace{ -1\baselineskip}

{\small
\begin{center}

\medskip

  {\sc  Huyuan Chen}
  \medskip

 Department of Mathematics, Jiangxi Normal University,\\
Nanchang, Jiangxi 330022, PR China\\%[2mm]
%and\\
% NYU-ECNU Institute of Mathematical Sciences at NYU Shanghai,\\
% Shanghai 200120, PR China\\
 Email: chenhuyuan@yeah.net\\[10pt]
 {\sc  Feng Zhou}
   \medskip

Center for PDEs and Department of Mathematics, East China Normal University,\\
 Shanghai, 200241, PR China\\
 Email: fzhou@math.ecnu.edu.cn\\[10pt]

\end{center}
}

\medskip

\begin{quote}
%\footnotesize
{\bf Abstract.}
Our purpose of this paper is to study the isolated singularities of  positive solutions  to  Choquard equation in the sublinear case $q \in (0,1)$
 $$\displaystyle \ \  -\Delta   u+  u =I_\alpha[u^p] u^q\;\;
  {\rm in}\;  \mathbb{R}^N\setminus\{0\},
 % \\[2mm]  \phantom{ }
 \;\;  \displaystyle   \lim_{|x|\to+\infty}u(x)=0,
$$
where $p >0,  N \geq 3, \alpha \in (0,N)$ and
$I_{\alpha}[u^p](x) = \int_{\mathbb{R}^N} \frac{u^p(y)}{|x-y|^{N-\alpha}}dy$ is the Riesz potential, which appears as a nonlocal term in the equation.
We investigate the nonexistence and existence of isolated singular solutions of
Choquard equation under different range of the pair of exponent $(p,q)$.
Furthermore, we obtain qualitative properties for the minimal singular solutions of the equation.

\end{quote}

 \renewcommand{\thefootnote}{}
%\footnote{Accepted for publication: January 2004.}
%\footnote{*Corresponding author.}
 \footnote{AMS Subject Classifications: 35J75, 35B40.}
\footnote{Key words: Classification of singularity; Choquard equation; sublinear case; polynomial decay; Dirac mass.}

\newcommand{\N}{\mathbb{N}}
\newcommand{\R}{\mathbb{R}}
\newcommand{\Z}{\mathbb{Z}}

\newcommand{\cA}{{\mathcal A}}
\newcommand{\cB}{{\mathcal B}}
\newcommand{\cC}{{\mathcal C}}
\newcommand{\cD}{{\mathcal D}}
\newcommand{\cE}{{\mathcal E}}
\newcommand{\cF}{{\mathcal F}}
\newcommand{\cG}{{\mathcal G}}
\newcommand{\cH}{{\mathcal H}}
\newcommand{\cI}{{\mathcal I}}
\newcommand{\cJ}{{\mathcal J}}
\newcommand{\cK}{{\mathcal K}}
\newcommand{\cL}{{\mathcal L}}
\newcommand{\cM}{{\mathcal M}}
\newcommand{\cN}{{\mathcal N}}
\newcommand{\cO}{{\mathcal O}}
\newcommand{\cP}{{\mathcal P}}
\newcommand{\cQ}{{\mathcal Q}}
\newcommand{\cR}{{\mathcal R}}
\newcommand{\cS}{{\mathcal S}}
\newcommand{\cT}{{\mathcal T}}
\newcommand{\cU}{{\mathcal U}}
\newcommand{\cV}{{\mathcal V}}
\newcommand{\cW}{{\mathcal W}}
\newcommand{\cX}{{\mathcal X}}
\newcommand{\cY}{{\mathcal Y}}
\newcommand{\cZ}{{\mathcal Z}}

\newcommand{\abs}[1]{\lvert#1\rvert}
\newcommand{\xabs}[1]{\left\lvert#1\right\rvert}
\newcommand{\norm}[1]{\lVert#1\rVert}

\newcommand{\loc}{\mathrm{loc}}
\newcommand{\p}{\partial}
\newcommand{\h}{\hskip 5mm}
\newcommand{\ti}{\widetilde}
\newcommand{\D}{\Delta}
\newcommand{\e}{\epsilon}
\newcommand{\bs}{\backslash}
\newcommand{\ep}{\emptyset}
\newcommand{\su}{\subset}
\newcommand{\ds}{\displaystyle}
\newcommand{\ld}{\lambda}
\newcommand{\vp}{\varphi}
\newcommand{\wpp}{W_0^{1,\ p}(\Omega)}
\newcommand{\ino}{\int_\Omega}
\newcommand{\bo}{\overline{\Omega}}
\newcommand{\ccc}{\cC_0^1(\bo)}
\newcommand{\iii}{\opint_{D_1}D_i}

\numberwithin{equation}{section}

\vskip 0.2cm \arraycolsep1.5pt
\newtheorem{lemma}{Lemma}[section]
\newtheorem{theorem}{Theorem}[section]
\newtheorem{definition}{Definition}[section]
\newtheorem{proposition}{Proposition}[section]
\newtheorem{remark}{Remark}[section]
\newtheorem{corollary}{Corollary}[section]

\setcounter{equation}{0}
\section{Introduction}

This is a continuation of the work \cite{CZ} on the study of the isolated singularities of positive solutions to Choquard equation
\begin{equation}\label{eq 1.1}
  \arraycolsep=1pt
\begin{array}{lll}
 \displaystyle\ \quad  -\Delta   u+  u =I_\alpha[u^p] u^q\quad
  {\rm in}\quad  \mathbb{R}^N\setminus\{0\},\\[2mm]
 \phantom{    }
 \;\; \displaystyle   \lim_{|x|\to+\infty}u(x)=0,
\end{array}
\end{equation}
where $u$ is a classical solution in $\mathbb{R}^N\setminus\{0\}$,  $p>0,\, q > 0$, $N\ge3$, $\alpha\in(0,N)$ and
$$I_\alpha[u^p](x)=\int_{\mathbb{R}^N}\frac{u^p
(y)}{|x-y|^{N-\alpha}}\, dy.$$
  We note that it is natural to assume that $\alpha>0$, otherwise $I_\alpha[u^p]$ will be infinite in whole $\R^N$ and here  $I_\alpha(x)=|x|^{\alpha-N}$ is the Riesz potential with the order $\alpha-N<0$, which is related to the fractional Laplacian  when $\alpha\in(0,2)$ and it is a nonlocal operator.
 Here $u$ is said to be a classical solution of (\ref{eq 1.1}) if $u\in C^2(\R^N\setminus\{0\})$, $I_\alpha[u^p]$ is well-defined in $\R^N\setminus\{0\}$ and $u$ satisfies  (\ref{eq 1.1}) pointwisely. We have developed in \cite{CZ} the method of Lions \cite{L} (e.g. \cite{BL}) to classify the isolated singularities of the Choquard  equation (\ref{eq 1.1}) for the case of $q \geq 1$.
The essential point to set $q\ge1$ in \cite{CZ} is due to the fact that the positive solutions of (\ref{eq 1.1}) decays exponentially and
this fact guarantees the existence of isolated singular solutions. When $0< q<1$, the situation of the nonlinearity becomes
subtle, because the decay at infinity  is no longer exponential and this causes many difficulties for the classification of the isolated singularities and for the construction of singular solutions of the equation.

The Choquard equation is considered as an approximation to Hartree-Fock theory for a one component plasma, which is a semilinear Schr\"{o}dinger-Newton type equation proposed by P. Choquard (for $N=3$, $\alpha=p=2$ and $q=1$) and  explained in Lieb and Lieb-Simon's papers \cite{L0,LS} respectively.
The Choquard type equations also arise in the  physics of multiple-particle systems (\cite{G}).  Furthermore, the Choquard type equations appear to a prototype of the nonlocal problems, which play a fundamental role in some Quantum-mechanical and non-linear optics (see e.g. \cite{GL,O}). As far as we know, the most mathematical results about the Choquard equations  are known for the case of $q =p-1$.
More precisely, we consider the problem in the variational setting:
$$\arraycolsep=1pt
\begin{array}{lll}
 \displaystyle   -\Delta   u+  u =I_\alpha[|u|^p] |u|^{p-2}u\quad
  {\rm in}\quad  \mathbb{R}^N,\\[2mm]
 \phantom{  --- - }
   u\in H^{1} (\R^N),
\end{array}
$$
that is, for the solutions which correspond to the critical points of a functional defined on the Sobolev space
$H^1(\mathbb{R}^N)$ and $p \in  (1, +\infty)$ satisfies
\begin{equation}\label{gs}
 \frac{N-2}{N+\alpha} \leq \frac{1}{p} \leq \frac{N}{N+\alpha}.
 \end{equation}
 Then the equation has a groundstate solution $u \in H^1(\mathbb{R}^N)$ if $p$ satisfies strictly the inequality in (\ref{gs}) (see \cite{MVS}, \cite{MZ}, \cite{L0} and in particular \cite{MVS1} which is a survey for the Choquard type equations, the related problems and the related references therein).

%In fact, we can obtain the nonexistence in the following, thanks to slow decaying.
Our aim  in this article focuses on the study of the Choqurad equation
%is to concentrate on the singularities of
 (\ref{eq 1.1}) when $q\in(0,1)$.
Without special explanation, we always assume in the sequel that
$${\bf N\ge3,\ \,  \alpha\in(0,N),\ \, p>0\, \ {\rm and}\ \, q\in(0,1).}$$
We emphasize that the problem we treated is in general non-variational and the solutions are considered in the distributional sense.
Our first result states the nonexistence of positive solution of (\ref{eq 1.1}).
 \begin{theorem}\label{teo 0}
Assume that
    \begin{equation}\label{q}
 (1-\frac\alpha N)p+q<1\quad{\rm and}\quad  p+q<1+\frac{\alpha}{N-2}.
 \end{equation}
Then  problem (\ref{eq 1.1}) has no any nonnegative nontrivial solution.
\end{theorem}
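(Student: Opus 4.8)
The plan is to argue by contradiction, assuming $u$ is a nonnegative nontrivial classical solution of \eqref{eq 1.1}, and to derive a growth estimate on $I_\alpha[u^p]$ from below near the origin that is incompatible with the integrability forced by the equation. First I would record the elementary consequence that $u > 0$ in $\mathbb{R}^N \setminus \{0\}$ by the strong maximum principle applied to $-\Delta u + u = I_\alpha[u^p]u^q \ge 0$. Next I would establish a pointwise lower bound for the Riesz potential: since $u$ is positive and $\lim_{|x|\to\infty}u=0$, on any fixed ball, say $B_1(e)$ with $|e|=2$, we have $\inf_{B_1(e)} u =: c_0 > 0$, hence $I_\alpha[u^p](x) \ge c_1 > 0$ for all $x$ in a neighbourhood of the origin; in fact one gets $I_\alpha[u^p](x) \ge c\,|x|^{\alpha - N}$ is too strong in general, so instead I would use that $I_\alpha[u^p](x)$ is bounded below by a positive constant near $0$, which already feeds a Keller–Osserman / Harnack-type lower bound on $u$.

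The heart of the matter is then a bootstrap near the origin. From $-\Delta u + u \ge c_1 u^q$ with $q\in(0,1)$, comparison with the singular solution of the ODE model $-\Delta w = c_1 w^q$ (whose radial fundamental-type solution behaves like $|x|^{2 - N}$ or, for the inhomogeneous $w^q$ balance, like $|x|^{\frac{2-N+\cdots}{\cdots}}$) forces $u(x) \ge c\,|x|^{-\gamma}$ near $0$ for a suitable $\gamma > 0$ — concretely $\gamma$ should be at least $N-2$, using that any positive solution with a genuine singularity dominates the harmonic singularity $|x|^{2-N}$, or $u \equiv$ bounded. In the bounded case one shows $u$ extends to a solution on all of $\mathbb{R}^N$ and then a global Pohozaev/integration argument under the subcriticality $p+q < 1 + \frac{\alpha}{N-2}$ kills it. In the genuinely singular case, I would iterate: a lower bound $u \gtrsim |x|^{-\gamma}$ improves the lower bound on $I_\alpha[u^p]$ near $0$ to $I_\alpha[u^p](x) \gtrsim |x|^{\alpha - N + (N - p\gamma)_+}$-type behaviour, i.e. $I_\alpha[u^p](x) \gtrsim |x|^{\alpha - p\gamma}$ once $p\gamma > N$ (the integral concentrates near the singularity), which then feeds back into $-\Delta u + u \gtrsim |x|^{\alpha - p\gamma} u^q$ and upgrades $\gamma$. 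The condition $(1-\frac{\alpha}{N})p + q < 1$ is exactly what makes this iteration \emph{diverge}: each step multiplies the exponent by a factor exceeding $1$, so $\gamma \to \infty$, contradicting the local integrability of $u$ (a distributional supersolution of $-\Delta u \gtrsim 0$ near $0$ must satisfy $u \in L^1_{\loc}$, hence $\gamma < N$), or more sharply contradicting that $I_\alpha[u^p]$ is well-defined, which requires $p\gamma < N + \alpha$ locally.

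Carrying this out cleanly, the main obstacle is making the interaction between the \emph{nonlocal} term and the \emph{sublinear} reaction precise: the estimate $I_\alpha[u^p](x)\gtrsim |x|^{\alpha-p\gamma}$ near $0$ is only valid when $p\gamma>N$ (the integrand is non-integrable at $0$), and for $p\gamma\le N$ the potential stays bounded, so the bootstrap has two regimes and one must check that the combined exponents still march to infinity under hypothesis \eqref{q}. I would handle this by choosing $\gamma_0 = N-2$ as the seed (or the maximal exponent compatible with $u$ being a distributional solution) and showing the recursion $\gamma_{k+1} = \frac{2 + (p\gamma_k - N)}{1 - q}$ — valid once $p\gamma_k > N$ — is strictly increasing and unbounded precisely when $(1-\frac{\alpha}{N})p + q < 1$ together with $p + q < 1 + \frac{\alpha}{N-2}$, the latter guaranteeing the seed is already in the divergent regime or handling the borderline where $u$ is bounded. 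The second condition in \eqref{q} is what rules out the bounded branch via the global argument; the first drives the singular branch to contradiction.
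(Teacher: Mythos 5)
Your proposal runs the bootstrap at the wrong end of space: you try to blow up a lower bound $u\gtrsim |x|^{-\gamma}$ \emph{near the origin}, whereas under hypothesis \eqref{q} the actual obstruction is the behaviour \emph{at infinity}. This is not cosmetic; it creates two genuine gaps. First, the theorem excludes all nonnegative nontrivial solutions of \eqref{eq 1.1}, including those that are bounded near $0$ and hence solve the equation classically on all of $\R^N$. For these your near-origin iteration never starts, and you defer the ``bounded branch'' to an unspecified global Poho\v{z}aev argument. None is available here: the problem is non-variational for general $(p,q)$, and the Poho\v{z}aev-type nonexistence of Moroz--Van Schaftingen applies to $H^1$ groundstates with $q=p-1$, not to distributional solutions in this range --- the paper explicitly warns that the admissible $(p,q)$ region differs from the variational one. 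Second, even on the singular branch your recursion is governed by the wrong condition. Balancing $-\Delta u\gtrsim |x|^{\alpha-p\gamma}u^{q}$ near $0$ gives (in either regime) a multiplicative factor $\frac{p}{1-q}$ or $p+q$ on the exponent, so the iteration expands only when $p+q>1$, and escapes from the seed $\gamma_0=N-2$ only in the supercritical regime $p+q\ge\frac{N+\alpha}{N-2}$ of \eqref{1.3} --- i.e.\ precisely where Theorem \ref{teo 1} proves removability, not where Theorem \ref{teo 0} claims nonexistence. Hypothesis \eqref{q} is compatible with $p+q<1$, in which case your exponents contract and no contradiction appears. (Also, your regime $p\gamma>N$ cannot occur for a classical solution, since it would make $I_\alpha[u^p]\equiv+\infty$ outright; the meaningful window is $\alpha<p\gamma<N$.)

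The paper's proof uses the same general mechanism --- power-law lower bounds fed through the Riesz potential --- but at infinity, where it applies to every nontrivial solution, bounded at the origin or not. Since $u>0$ on some fixed ball, $I_\alpha[u^p](x)\ge c|x|^{\alpha-N}$ for large $|x|$, hence $-\Delta u+u\ge c|x|^{\alpha-N}u^{q}$; comparison with $v(x)=c|x|^{\tau_0}$, where $\tau_0=-\max\{\frac{N-\alpha}{1-q},\,N-2\}$, using crucially the sublinearity $q<1$ and the zeroth-order term $+u$, yields the seed bound $u\ge b_0|x|^{\tau_0}$ for $|x|\ge1$ (Proposition \ref{pr e 1}). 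Feeding $u\gtrsim|x|^{\tau_j}$ back into the potential gives $I_\alpha[u^p]\gtrsim|x|^{\alpha+\tau_jp}$ at infinity and the improved exponent $\tau_{j+1}=\frac{\alpha}{1-q}+\frac{p}{1-q}\tau_j$ (Proposition \ref{pr e 2}); condition \eqref{q} is exactly what makes $\tau_1>\tau_0$, hence $\{\tau_j\}$ increasing until $\alpha+\tau_{j_0}p\ge0$, at which point $I_\alpha[u^p](x_0)=+\infty$ because $u^p$ decays too slowly at infinity for the Riesz potential to converge. The contradiction is the failure of $I_\alpha[u^p]$ to be well-defined, not blow-up of $u$ at the origin. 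To repair your write-up, replace the near-origin bootstrap by this bootstrap at infinity and delete the Poho\v{z}aev step.
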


%\begin{remark}
Notice that in
\cite{MVS}, the authors showed an elegant  results on the groundstates for the case $q=p-1$ and gave the optimal range of $p$ for the existence of the groundstates solution. In fact, they proved that (\ref{eq 1.1})
has no nontrivial solution when $p\ge \frac{N+\alpha}{N-2}$ or $p\le 1+\frac{\alpha}{N}$, at least for groundstates solutions by applying Poho\v{z}aev identity. In our case, the distributional solution are weaker than the variational solution, and we consider the pair exponent $(p,q)$
in the planar domain, so the range of $(p,q)$ for the existence or nonexistence of distributional solution are delicate. It would be larger than the one in variational sense.
%While the nonexistence in Theorem \ref{teo 0} is different, which is due to the decay at infinity.
%\end{remark}

Next we present the classification of singularities of (\ref{eq 1.1}), inspired by \cite{CZ}.

\begin{theorem}\label{teo 1}
Let $u$ be a nonnegative classical solution of (\ref{eq 1.1}) and
 \begin{equation}\label{1.2}
(1-\frac\alpha N)p+q\ge1\qquad{\rm or}\qquad p+q \ge 1+\frac{\alpha}{N-2}.
 \end{equation}
We assume that $(i)$ $u^p\in L^1(\R^N)$  or  $(ii)$ $u^p\in L^1_{loc}(\R^N)$ and there exist $\bar \alpha\in(\alpha,\,N)$ and $c_0>0$ such that
$$u^p(x)\le c_0|x|^{-\bar \alpha},\quad\forall\, |x|>1.$$
 Then  there exists $k\ge0$
  such that  $u$ is a solution of
 \begin{equation}\label{eq 1.2}
    \arraycolsep=1pt
\begin{array}{lll}
 \displaystyle \ \  -\Delta u+u=I_\alpha[u^p]u^q+k\delta_0\quad
 &{\rm in}\quad \R^N,\\[2mm]
 \phantom{   }
 \;\; \displaystyle   \lim_{|x|\to\infty}u(x)=0,
\end{array}
\end{equation}
in the sense of distribution,
that is,
 \begin{equation}
     \int_{\R^N} \left(u (-\Delta  \xi)+u\xi-I_\alpha[u^p]u^q\xi\right)\, dx=k\xi(0),\quad \forall \xi\in C^\infty_c(\R^N),
\end{equation}
where $C^\infty_c(\R^N)$ is the space of all the functions in  $C^\infty(\R^N)$   with  compact support.

Furthermore,
$(i)$  when
\begin{equation}\label{1.3}
 p+q\ge \frac{N+\alpha}{N-2}\quad{\rm or}\quad p\ge \frac{N}{N-2},
\end{equation}
then $k=0$;

$(ii)$ when
\begin{equation}\label{1.4}
 p+q< \frac{N+\alpha}{N-2}\quad{\rm and}\quad p< \frac{N}{N-2}.
\end{equation}
  If $k=0$,  then $u$ is a classical solution of
  \begin{equation}\label{eq 1.3}
  \arraycolsep=1pt
\begin{array}{lll}
 \displaystyle \quad\  -\Delta  u+u=I_\alpha[u^p]u^q\quad
 &{\rm in}\quad \R^N,\\[2mm]
 \phantom{   }
 \;\;\displaystyle   \lim_{|x|\to+\infty}u(x)=0;
\end{array}
\end{equation}
 if $k>0$, then $u$ satisfies that
\begin{equation}\label{1.5}
 \lim_{|x|\to0^+} u(x)|x|^{N-2}=c_{N} k,
\end{equation}
where $c_N$ is the normalized constant depending only on $N$.
\end{theorem}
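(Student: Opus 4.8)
The plan is to treat this as a removable-singularity statement followed by a dichotomy according to the strength of the singularity, exactly in the spirit of the Lions/Brezis–Lions method already developed in \cite{CZ}. First I would establish that any nonnegative classical solution $u$ of (\ref{eq 1.1}) satisfying hypothesis $(i)$ or $(ii)$ is in fact locally integrable near the origin together with its Riesz-potential nonlinearity. Under $(i)$, $u^p\in L^1(\R^N)$ makes $I_\alpha[u^p]$ a bounded continuous function away from $0$; under $(ii)$, the decay bound $u^p(x)\le c_0|x|^{-\bar\alpha}$ with $\bar\alpha>\alpha$ guarantees $I_\alpha[u^p](x)=\int_{\R^N}|x-y|^{\alpha-N}u^p(y)\,dy$ converges and has at worst a polynomial growth/decay that is controlled; this is precisely where the two integrability conditions on the exponents in (\ref{1.2}) enter, to ensure $I_\alpha[u^p]u^q\in L^1_{loc}(\R^N)$ so that the right-hand side of the equation defines a distribution. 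Combined with $u$ being superharmonic (since $-\Delta u = I_\alpha[u^p]u^q - u$ and one can absorb the sign), $u\in L^1_{loc}$ near $0$ follows from the standard fact that a nonnegative superharmonic function is locally integrable.

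Next, with $u\in L^1_{loc}(\R^N)$, $I_\alpha[u^p]u^q\in L^1_{loc}(\R^N)$, and $u\in L^1_{loc}$, I would invoke the Brezis–Lions lemma: a nonnegative function which is a distributional solution of $-\Delta u + u = f$ in a punctured ball, with $u,f\in L^1_{loc}$ of the full ball, satisfies $-\Delta u + u = f + k\delta_0$ in $\mathcal D'$ for some $k\ge0$. This yields the distributional identity (\ref{eq 1.2}) with the test-function formulation stated. The harmonic-measure/Newtonian-potential representation then gives $u = k\, c_N|x|^{2-N} + (\text{less singular terms})$ near the origin, where the remainder is the Newtonian potential of $f - u \in L^1_{loc}$, which is $o(|x|^{2-N})$; this is the source of the asymptotic (\ref{1.5}).

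For the two cases of the dichotomy: in case $(i)$, under (\ref{1.3}), I would argue $k=0$ by a bootstrap/contradiction. If $k>0$ then $u(x)\sim c_N k|x|^{2-N}$ near $0$, so $u^p(x)\sim (c_Nk)^p|x|^{(2-N)p}$; the condition $p\ge N/(N-2)$ makes $u^p\notin L^1_{loc}$, contradicting (in case (i)) the assumption $u^p\in L^1(\R^N)$, or (more generally) making $I_\alpha[u^p]$ diverge. The alternative condition $p+q\ge (N+\alpha)/(N-2)$ is handled by estimating $I_\alpha[u^p](x)u^q(x)\gtrsim |x|^{\alpha+(2-N)p}\cdot|x|^{(2-N)q} = |x|^{\alpha-(N-2)(p+q)}$ near $0$, whose Newtonian potential would then be more singular than $|x|^{2-N}$, i.e.\ $u$ would be forced to be strictly more singular than the assumed $c_Nk|x|^{2-N}$, contradicting (\ref{1.5}); iterating this pushes $k$ to $0$. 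In case $(ii)$, if $k=0$ then $u$ has a removable singularity: the remainder representation shows $u$ is bounded near $0$, hence (by elliptic regularity, since the right-hand side is then continuous near $0$) $u\in C^2$ across the origin and solves (\ref{eq 1.3}) classically; if $k>0$ the asymptotic (\ref{1.5}) is exactly what the potential representation delivers, and one checks consistency, i.e.\ that under (\ref{1.4}) the singularity $|x|^{2-N}$ is indeed compatible with $u^p$ and $I_\alpha[u^p]u^q$ remaining locally integrable (this is what (\ref{1.4}) guarantees: $p<N/(N-2)$ keeps $u^p\in L^1_{loc}$ and $p+q<(N+\alpha)/(N-2)$ keeps the nonlinear term subcritical relative to the fundamental solution).

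The main obstacle I expect is the handling of the nonlocal term $I_\alpha[u^p]$ in the singularity analysis: unlike a purely local nonlinearity, $I_\alpha[u^p](x)$ near $x=0$ is influenced both by the local blow-up $u^p\sim|x|^{(2-N)p}$ near $0$ and by the global behavior/decay of $u$ at infinity, so getting sharp two-sided bounds on $I_\alpha[u^p]$ near the origin (of the form $c_1|x|^{\alpha-(N-2)p}\le I_\alpha[u^p](x)\le c_2|x|^{\alpha-(N-2)p}$ when $(N-2)p<N$, with logarithmic or constant corrections at the borderline, and a bounded contribution otherwise) requires splitting the Riesz integral into near/intermediate/far regions and using the decay hypothesis carefully. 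Closing the bootstrap in case $(i)$ to force $k=0$ — rather than merely getting a contradiction in one step — is the other delicate point, since one must track how each iteration improves the exponent in the lower bound for $u$ until it violates local integrability.
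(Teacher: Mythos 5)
Your proposal follows essentially the same route as the paper: first show $u\in L^1_{loc}(\R^N)$ and $I_\alpha[u^p]u^q\in L^1_{loc}(\R^N)$ (this is the content of Lemma \ref{lm 2.2}, where hypothesis $(ii)$ with $\bar\alpha>\alpha$ is used exactly as you describe), then apply the Brezis--Lions theorem (Theorem \ref{teo 2.1}, \cite{BL}) to produce the Dirac mass $k\delta_0$, and finally run the singularity dichotomy of \cite{CZ}: under \eqref{1.3} the lower bound $u\gtrsim k|x|^{2-N}$ makes $u^p$ or $I_\alpha[u^p]u^q$ fail to be locally integrable unless $k=0$, while under \eqref{1.4} the Green representation yields \eqref{1.5} for $k>0$ and removability for $k=0$. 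The paper's own proof is precisely this scheme (stated tersely by citation to \cite[Theorem 1.1]{CZ} plus the new lemma for case $(ii)$), so your reconstruction is correct and matches it.
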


 We remark that Theorem \ref{teo 1} part $(i)$ shows that in the case (\ref{1.3}),
the singularities of positive solutions of (\ref{eq 1.1}) are not visible in the distribution sense by the Dirac mass and it is open but interesting to consider the singularities in this case. The Choquard equation (\ref{eq 1.1}) could be divided into a system of equations with the Laplacian in the
linear part of the first equation and fractional Laplacian in the second one. We remark that the Dirac mass only appears
in the first equation for the system. This is different for considering the system directly. As in \cite{CZ}, the basic tool we used
to connect the singular solutions of
elliptic equation in punctured domain and the solutions of corresponding elliptic equation in the distributional
 sense is an early result \cite{BL} due to
Brezis and Lions on the study of isolated singularities.
It is well known that the study of singularities of semilinear elliptic equations is a major subject in PDEs and they have been investigated widely for decades (see for example \cite{A,BB,BV,BL,CGS,
DDG,DGW,GS,MV,V}).

Note that in \cite{GT}, the authors gave some very interesting results on the behavior of the
isolated singularity for positive subsolution $u\in C^2(\R^N\setminus\{0\})$ of the Choquard type inequality
$$ 0 \leq -\Delta  u =I_\alpha[u^p]u^q \quad
  {\rm in}\quad  B_1(0) \setminus\{0\},
$$
where the methods on analysis of the singularities are also different. In particular the solution for their case is a superhamonic function and the operator that we consider here is $-\Delta + Id$.

As we have mentioned above, the case of $0< q<1$ is delicate, at least for the methods we have used in \cite{CZ} for the classification of isolated singularities and existence of solutions. In fact, as showed by Moroz and Van Schaftingen in \cite{MVS},\cite{MVS0} (see e.g. \cite{MVS1}), the solutions of the Choquard equation may have polynomial decay at infinity,  which makes the classification of singularities difficult. The polynomial decay at infinity can not guarantee that $I_\alpha[u^p] $ is well defined and then it may cause the nonexistence of solutions for the equation. However, we can still prove the existence results stated as follows.

\begin{theorem}\label{teo 2}
Assume that $(p,q)\in(0,\frac{N}{N-2})\times (0,1)$ satisfies (\ref{1.2}) and $ p+q <\frac{N+\alpha}{N-2}$.

 Then there exists $k^*>0$   such that
 for $ k\in(0, k^*)$, problem (\ref{eq 1.2}) admits a minimal positive solution $u_{k}$ in the distribution sense, which is a classical solution of (\ref{eq 1.1}) and satisfies (\ref{1.5}).

Moreover, if
\begin{equation}\label{1.6}
 (1-\frac\alpha N)p+q>1\quad{\rm and}\quad  p+q<  \frac{N+\alpha}{N-2},
\end{equation}
the solution $u_k$ has  the decay at infinity as
\begin{equation}\label{1.8}
 \lim_{|x|\to+\infty}u_k(x)|x|^{\frac{N-\alpha}{1-q}}=\norm{u_k}^{\frac{p}{1-q}}_{L^1(\R^N)};
\end{equation}
if
\begin{equation}\label{1.9}
 (1-\frac\alpha N)p+q\le1\quad{\rm and}\quad  1+\frac{\alpha}{N-2}\le p+q<  \frac{N+\alpha}{N-2},
\end{equation}
the solution $u_k$ has  the decay at infinity as
\begin{equation}\label{1.10}
 \limsup_{|x|\to+\infty}u_k(x)|x|^{\max\{N-2,\frac{N-\alpha}{1-q}\}}\le k\quad{\rm and}\quad  \liminf_{|x|\to+\infty}u_k(x)|x|^{\max\{N-2,\frac{N-\alpha}{1-q}\}}>0.
\end{equation}
\end{theorem}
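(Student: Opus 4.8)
The plan is to construct the minimal singular solution $u_k$ by a monotone iteration scheme anchored on the linear problem, and then extract the precise decay rates from a careful analysis of the nonlocal term $I_\alpha[u_k^p]$ at infinity. First I would recall that the fundamental solution $\Phi$ of $-\Delta + \mathrm{Id}$ in $\R^N$ (a Bessel kernel) satisfies $\Phi(x)\sim c_N|x|^{2-N}$ as $|x|\to 0$ and decays exponentially as $|x|\to\infty$, so that $k\delta_0$ contributes the term $k\Phi$, which already accounts for the singularity profile (\ref{1.5}). For the iteration, set $u_0 = k\Phi$ and define $u_{n+1}$ as the solution of $-\Delta u_{n+1} + u_{n+1} = I_\alpha[u_n^p]u_n^q + k\delta_0$, i.e. $u_{n+1} = k\Phi + \Phi * (I_\alpha[u_n^p]u_n^q)$. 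The hypothesis $p+q < \frac{N+\alpha}{N-2}$ together with $p < \frac{N}{N-2}$ is exactly what makes the map $k\mapsto$ (first iterate of the nonlinear correction) sublinear enough near zero: one shows, using the Hardy–Littlewood–Sobolev inequality and the local integrability of $|x|^{(2-N)p}$ near the origin (guaranteed by $p<\frac{N}{N-2}$, hence $(N-2)p<N$), that for $k$ small the sequence $(u_n)$ is monotone increasing and bounded above by a supersolution of the form $2k\Phi$, or more robustly by $Ck\Phi + $ a correction; the threshold $k^*$ is the supremum of $k$ for which such a supersolution exists. Passing to the limit, $u_k$ solves (\ref{eq 1.2}) in the distributional sense, it is a classical solution away from the origin by elliptic regularity once $I_\alpha[u_k^p]$ is shown to be bounded on compact sets of $\R^N\setminus\{0\}$ (using $q\in(0,1)$ to handle the $u^q$ factor, which is actually harmless as it is bounded near infinity and only mildly singular — in fact bounded — near $0$ since $u_k$ blows up there), and minimality follows from the standard observation that any other positive distributional solution dominates $k\Phi = u_0$ and hence, inductively, every $u_n$.

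The second and more delicate part is the decay at infinity. Write $u_k = k\Phi + v_k$ with $v_k = \Phi*(I_\alpha[u_k^p]u_k^q)$. Since $\Phi$ decays exponentially, the decay of $u_k$ is governed by $v_k$, and since $I_\alpha[u_k^p]u_k^q$ has polynomial decay, convolving with the exponentially-decaying $\Phi$ acts essentially like multiplication by $\int\Phi = 1$: one expects $v_k(x) \sim \big(I_\alpha[u_k^p]u_k^q\big)(x)$ for large $|x|$, provided the right-hand side does not decay too fast relative to the scale set by $\Phi$. Now $u_k^p\in L^1(\R^N)$ — which must be verified as part of the argument, and is where the constraint $p+q<\frac{N+\alpha}{N-2}$ again enters to control the self-consistency — so $I_\alpha[u_k^p](x) \sim \|u_k^p\|_{L^1(\R^N)}|x|^{\alpha-N}$ as $|x|\to\infty$ by the standard asymptotics of the Riesz potential of an $L^1$ mass. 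Under (\ref{1.6}), one checks that the ansatz $u_k(x)\sim A|x|^{-\frac{N-\alpha}{1-q}}$ is consistent: plugging in gives $u_k^q \sim A^q |x|^{-q\frac{N-\alpha}{1-q}}$, so $I_\alpha[u_k^p]u_k^q \sim \|u_k^p\|_1 A^q |x|^{\alpha - N - q\frac{N-\alpha}{1-q}} = \|u_k^p\|_1 A^q |x|^{-\frac{N-\alpha}{1-q}}$, matching $u_k$ if $A = \|u_k^p\|_1 A^q$, i.e. $A = \|u_k^p\|_1^{1/(1-q)}$, which is precisely (\ref{1.8}); the condition $(1-\frac\alpha N)p + q > 1$ is what guarantees $\frac{p(N-\alpha)}{1-q} > N$, so that $u_k^p$ with this decay is indeed integrable, closing the bootstrap. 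I would make this rigorous by setting up upper and lower barriers of the form $(A\pm\e)|x|^{-\frac{N-\alpha}{1-q}}$ for large $|x|$ and invoking a comparison principle for $-\Delta + \mathrm{Id}$, combined with a Harnack-type lower bound to get the $\liminf$ positive. In case (\ref{1.9}), when $(1-\frac\alpha N)p+q\le 1$, the polynomial decay $|x|^{-\frac{N-\alpha}{1-q}}$ would make $u_k^p$ non-integrable, so the self-consistent rate cannot be faster than the one forced by the Dirac term propagated through the linear operator, namely $|x|^{2-N}$ (the slowest algebraic decay of $\Phi*\mu$ for a finite measure $\mu$ is actually exponential, but the nonlinear source sustains an algebraic tail); the correct rate is then $\max\{N-2,\frac{N-\alpha}{1-q}\}$ and one only obtains the two-sided estimate (\ref{1.10}) with the crude constant $k$, again via sub- and supersolution barriers, since the exact constant is no longer determined by a clean fixed-point relation.

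The main obstacle I anticipate is the self-consistency of the integrability $u_k^p\in L^1(\R^N)$: the decay of $u_k$ depends on $\|u_k^p\|_{L^1}$ being finite, but that finiteness depends on the decay being fast enough, so the argument is genuinely circular and must be broken by first establishing a preliminary (non-sharp) decay estimate — e.g. $u_k(x)\le C|x|^{-\beta}$ for some $\beta$ large enough to force $u_k^p\in L^1$ — before bootstrapping to the sharp rate. This preliminary estimate should come from the construction itself: the supersolution controlling the iteration should already be chosen to decay at least like $|x|^{-\gamma}$ with $\gamma$ in the admissible range dictated by (\ref{1.6}) or (\ref{1.9}), which is feasible precisely because $p+q < \frac{N+\alpha}{N-2}$ prevents the nonlinear feedback from being too strong. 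A secondary technical point is justifying that convolution with $\Phi$ preserves the algebraic asymptotic constant (not just the rate); this is a standard dominated-convergence argument using $\int_{\R^N}\Phi = 1$ and the exponential tail of $\Phi$, applied after rescaling, and I would state it as a lemma rather than belabor it in the main proof.
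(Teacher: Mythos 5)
Your overall architecture (monotone iteration from $u_0=k\Phi$, supersolution for small $k$, minimality by induction, then Riesz-potential asymptotics plus barriers for the decay) matches the paper's, and your decay analysis is essentially sound: the paper proves the same asymptotics via Proposition \ref{pr 4.1} together with the substitution $w=u_k^{1-q}$ and Young's inequality $I_\alpha[u^p]u^q\le(1-q)I_\alpha[u^p]^{1/(1-q)}+qu$, whereas you propose $\epsilon$-perturbed barriers $(A\pm\epsilon)|x|^{-\frac{N-\alpha}{1-q}}$; both routes work and are comparable in effort. You also correctly flag the circularity between $u_k^p\in L^1$ and the decay rate, which the paper breaks exactly as you suggest, by building the decay into the supersolution.

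The genuine gap is in the existence part: the supersolution you actually propose, $2k\Phi$ (or any multiple $Ck\Phi$ of the Bessel kernel), fails outright. Away from the origin $-\Delta(Ck\Phi)+Ck\Phi=0$, while $I_\alpha[(Ck\Phi)^p](Ck\Phi)^q>0$ everywhere, so the supersolution inequality has no slack to absorb the source; more fundamentally, since $q<1$, for any exponentially decaying profile $w$ one has $|x|^{\alpha-N}w^q\gg w$ at infinity, so no exponentially decaying function can be a supersolution. This is precisely the difficulty of the sublinear case that distinguishes this theorem from the $q\ge1$ case in \cite{CZ}. Your hedge ``$Ck\Phi+$ a correction'' points in the right direction but leaves unspecified the one object that carries the proof: the correction must be a polynomial tail $\sim k|x|^{\tau_0}$ with $\tau_0=-\max\{N-2,\frac{N-\alpha}{1-q}\}$ (the paper's $w_k=k[\phi+a_0\varphi]$ in Lemma \ref{lm 4.3}), and verifying the supersolution inequality at infinity for this profile reduces to $\max\{\alpha-N,\alpha+\tau_0 p\}+\tau_0 q\le\tau_0$, i.e.\ to $\tau_0\le-\frac{\alpha}{p+q-1}$, which is exactly hypothesis (\ref{1.2}). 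Nowhere in your proposal is (\ref{1.2}) invoked, yet without it the construction (and, by Theorem \ref{teo 0}, the very existence of solutions) collapses; you would need to supply this computation, together with the near-origin check that $|x|^{\alpha-(N-2)(p+q)}$ is dominated by the extra term $c(\tau)k|x|^{\tau-2}e^{-|x|^2/2N}$ coming from the choice $\tau\in(2-N,\,2+\alpha-(N-2)(p+q))$, which is where $p+q<\frac{N+\alpha}{N-2}$ enters. (A minor slip: near the origin $u_k^q\sim|x|^{(2-N)q}$ is not bounded; what saves local integrability of $I_\alpha[u_k^p]u_k^q$ is again $p+q<\frac{N+\alpha}{N-2}$.)
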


 \begin{figure}
   \centering
  \subfigure{
 \begin{minipage}{65mm}
  \includegraphics[scale=0.033]{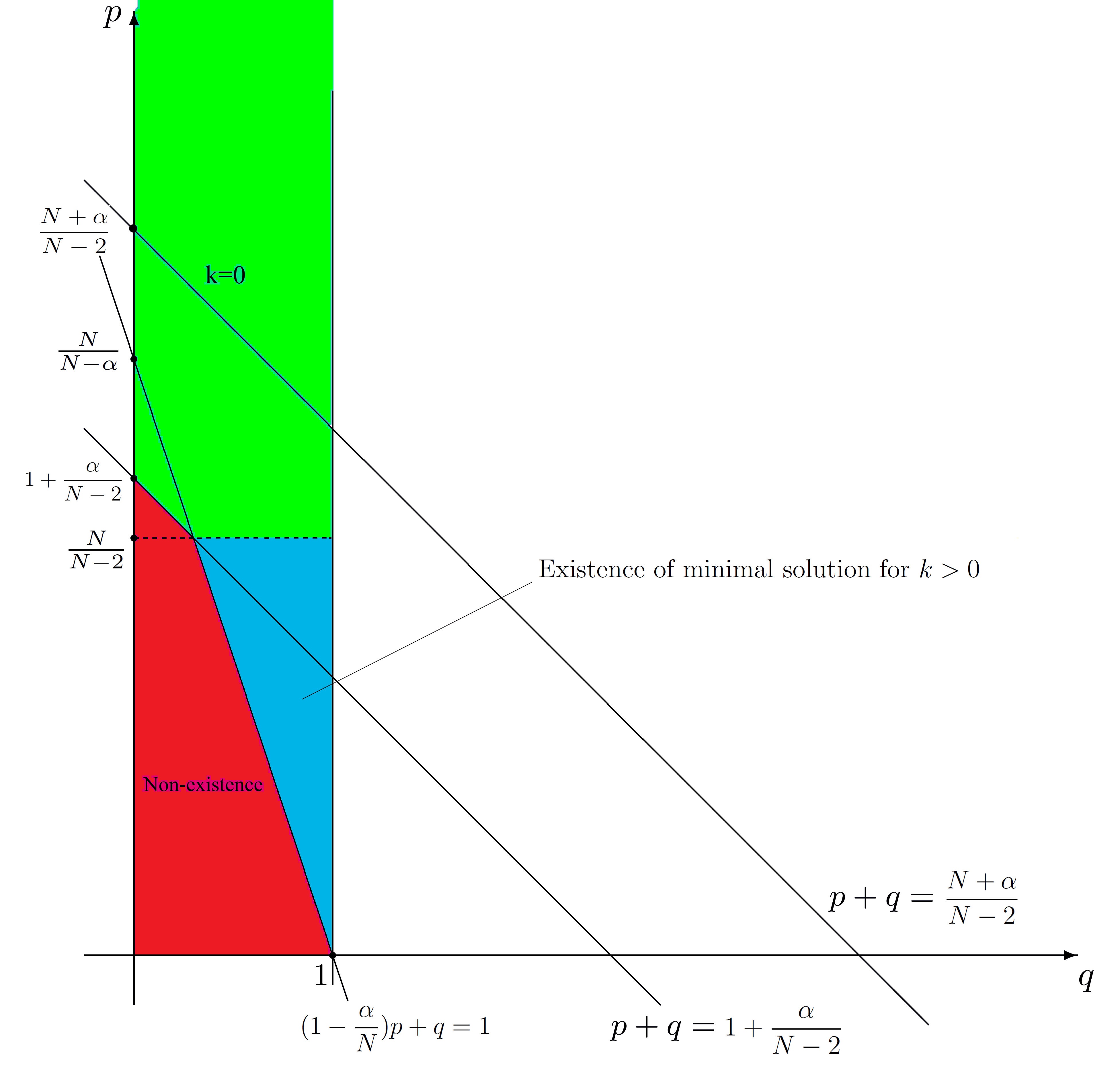}
   
   \qquad{\small  Figure 1. $\alpha\in(\sqrt{2N},N)$}
  \end{minipage}
 }
   \subfigure{
   \begin{minipage}{60mm}
  \includegraphics[scale=0.033]{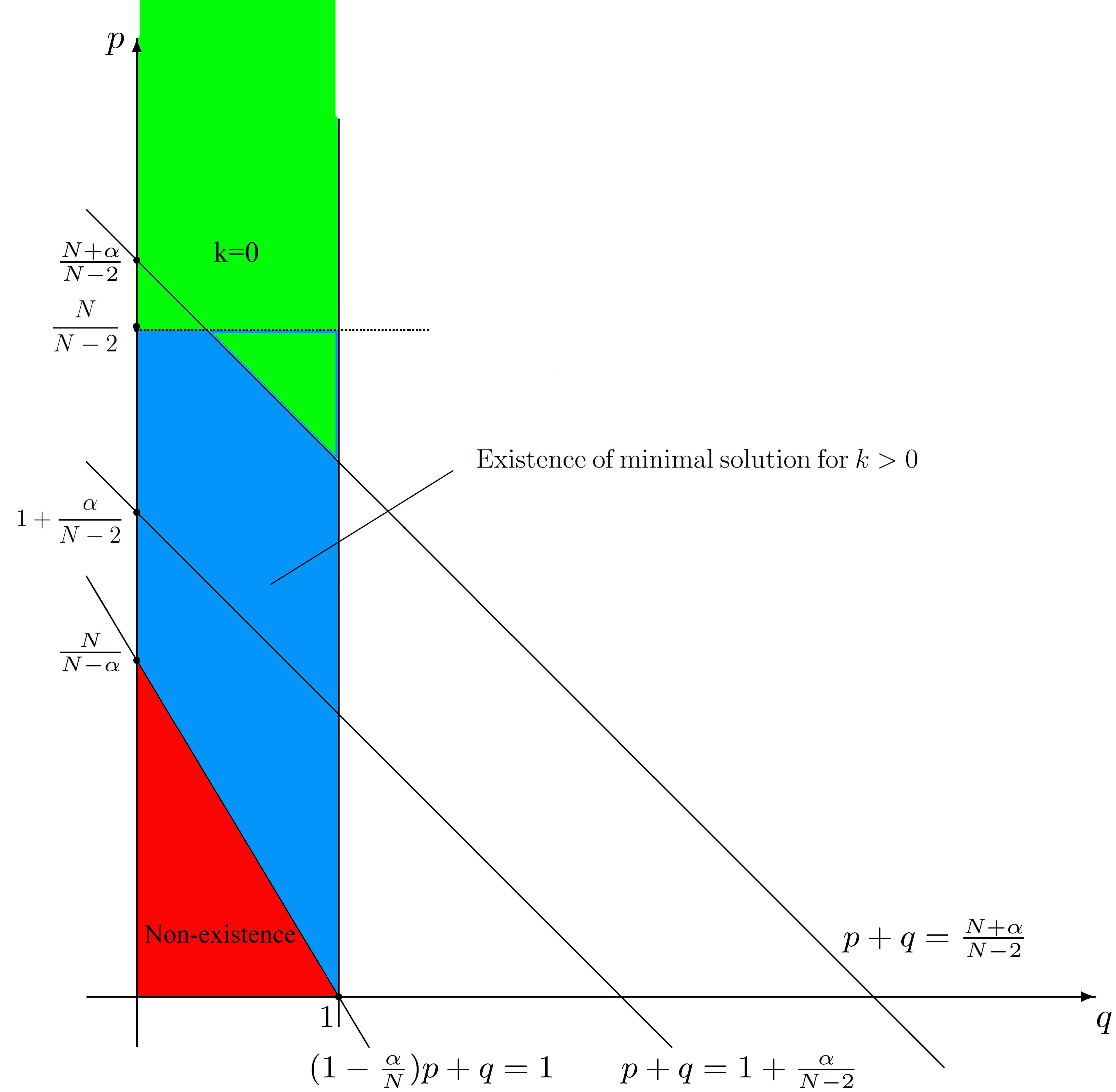}
  
   \qquad {\small Figure 2. $\alpha \in (0,2)$}
  \end{minipage}
  }

 {\footnotesize     The red regions of $(p,q)$ represent the nonexistence showed in Theorem \ref{teo 0}, the blue and the green ones are related to (\ref{1.4}) and (\ref{1.3}) respectively in Theorem \ref{teo 1}  and Theorem \ref{teo 2} proves the existence of (\ref{eq 1.2})\\ \ \ with $k>0$ in the blue regions.\hfill  }
\end{figure}

For the proof of this theorem, the standard iterating procedure (see e.g. \cite{V}) is adopted to obtain the existence of singular solutions of (\ref{eq 1.1}) and the main part is to construct a suitable upper bound for the procedure. Furthermore, this upper bound provides the first estimate in (\ref{1.10}), and combining (\ref{1.8}), we may conclude that
 $\norm{u_k}_{L^1(\R^N)}\le k^{\frac{1-q}{p}}.$ The proof of  (\ref{1.8}) is motivated by \cite{MVS},  where the authors provided the decay estimate
  for  the positive ground state solution of
$$
  \arraycolsep=1pt
 \begin{array}{lll}
  \displaystyle\ \quad  -\Delta   u+  u =I_\alpha[u^p] u^{p-1}\quad
   {\rm in}\quad  \mathbb{R}^N,\\[2mm]
  \phantom{    }
  \;\; \displaystyle   \lim_{|x|\to+\infty}u(x)=0.
 \end{array}
$$
Our method is to set for $u^p\le |x|^{-\beta}$ with $\beta>N$. However, $u^p$ is even no longer in $L^1(\R^N)$  if (\ref{1.6}) fails. In fact,
 from Lemma \ref{pr e 1} below, we have that
 $u_k\ge c_1|x|^{\tau_0}$ for $|x|> 1$,
 where $c_1>0$ and $\tau_0=-\max\{N-2,\frac{N-\alpha}{1-q}\}$. The upper bound for the decay at infinity could be seen in Lemma \ref{lm 4.3}
 where we construct super solutions to  control iterating procedure for the existence of $u_k$.
When $\alpha\in(0,2]$, conditions (\ref{q}) and (\ref{1.2}) could be reduced into  $(1-\frac\alpha N)p+q<1$ and
$(1-\frac\alpha N)p+q\ge1$ respectively.

The rest of this paper is organized as follows. Section 2  is devoted
to the non-existence of positive solution of (\ref{eq 1.1}). In  Section 3 we classify the singularities for equation (\ref{eq 1.1}) when the pair of exponent $(p,q)$ satisfies (\ref{1.2}), and prove the existence of singular solutions and show that the decay (\ref{1.8}) holds.

\hskip0.5cm

 \section{Nonexistence}

We prove the nonexistence of weak solution of (\ref{eq 1.1}) by contradiction. Assume that problem (\ref{eq 1.1})
admits a  nonnegative nontrivial solution $u$, and then  we will obtain a contradiction by the the blowing up phenomena derived from its decay at infinity.
 Let \begin{equation}\label{t0}
 \tau_0=-\max\{\frac{N-\alpha}{1-q},N-2\}
\end{equation}
and denote by $\{\tau_j\}_j$  the sequence
\begin{equation}\label{2.1}
 \tau_j=\frac{\alpha}{1-q}+\frac{p}{1-q}\tau_{j-1}\quad{\rm for}\quad  j=1,2,3\cdots.
\end{equation}
We start with the following
\begin{lemma}\label{lm 2.1}
Assume that $(p,\, q)$ satisfies (\ref{q}),
  then $\{\tau_j\}_j$ is an increasing sequence and
there exists $j_0\in\N $  such that
\begin{equation}\label{2.3}
\tau_{j_0-1}<-\frac{\alpha}{p} \quad {\rm and}\quad \tau_{j_0}\ge-\frac{\alpha}{p}.
\end{equation}
\end{lemma}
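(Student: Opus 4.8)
The recursion \eqref{2.1} is affine: setting $a:=\tfrac{\alpha}{1-q}>0$ and $b:=\tfrac{p}{1-q}>0$ it reads $\tau_j=a+b\,\tau_{j-1}$, so subtracting two consecutive instances gives $\tau_{j+1}-\tau_j=b\,(\tau_j-\tau_{j-1})$ and hence $\tau_{j+1}-\tau_j=b^{\,j}(\tau_1-\tau_0)$. Thus $\{\tau_j\}$ is strictly increasing exactly when $\tau_1>\tau_0$, and the plan is: first show that hypothesis \eqref{q} is precisely what forces $\tau_1>\tau_0$, then read off the crossing index $j_0$ from the (now monotone) asymptotics of the sequence.

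For the monotonicity I would write $\tau_1-\tau_0=a-(1-b)\tau_0=\dfrac{\alpha-(1-p-q)\,\tau_0}{1-q}$; since $1-q>0$ it suffices to show $\alpha>(1-p-q)\tau_0$. If $p+q\le1$ this is immediate because $\tau_0<0$. If $p+q>1$, then by \eqref{t0} one has $(1-p-q)\tau_0=(p+q-1)\max\bigl\{\tfrac{N-\alpha}{1-q},\,N-2\bigr\}$, so $\alpha>(1-p-q)\tau_0$ is equivalent to the two inequalities $(p+q-1)(N-2)<\alpha$ and $(p+q-1)\tfrac{N-\alpha}{1-q}<\alpha$ holding together; clearing denominators (using $q<1$), the first is the second inequality of \eqref{q}, while the second simplifies to $N(p+q-1)<\alpha p$, which is the first inequality of \eqref{q}. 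Hence \eqref{q} gives $\tau_1>\tau_0$ (and in fact is equivalent to it), so $\{\tau_j\}$ is increasing.

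For the existence of $j_0$, I would examine the limit of the monotone sequence through its fixed point $\tau^{\ast}=\tfrac{a}{1-b}=\tfrac{\alpha}{1-p-q}$ (when $p+q\ne1$), using $\tau_j-\tau^{\ast}=b^{\,j}(\tau_0-\tau^{\ast})$: if $p+q<1$ then $0<b<1$ and $\tau^{\ast}>0$, so $\tau_j\nearrow\tau^{\ast}>0$; if $p+q>1$ then $b>1$, and since the sequence increases one must have $\tau_0>\tau^{\ast}$, whence $\tau_j\to+\infty$; if $p+q=1$ then $\tau_j=\tau_0+ja\to+\infty$. In every case $\lim_{j}\tau_j>0>-\tfrac{\alpha}{p}$, so $\{j\ge0:\tau_j\ge-\tfrac{\alpha}{p}\}$ is nonempty; in the range $\tau_0<-\tfrac{\alpha}{p}$ relevant to the iteration in the proof of Theorem~\ref{teo 0} (for $\tau_0\ge-\tfrac{\alpha}{p}$ the a priori lower bound on $u$ already forces $I_\alpha[u^p]\equiv+\infty$, so nothing is iterated), $0$ does not lie in this set, and its minimum $j_0\ge1$ satisfies $\tau_{j_0}\ge-\tfrac{\alpha}{p}$ and, by minimality, $\tau_{j_0-1}<-\tfrac{\alpha}{p}$, which is \eqref{2.3}.

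The step I expect to be the real work is the monotonicity: it is where the full strength of \eqref{q} enters, and it needs care with which of $\tfrac{N-\alpha}{1-q}$ and $N-2$ realizes the maximum defining $\tau_0$, together with the sign of $1-p-q$. Once that is in place, the limit computation and the extraction of $j_0$ are routine.
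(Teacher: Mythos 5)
Your argument is correct and follows essentially the same route as the paper: reduce monotonicity to $\tau_0(1-p-q)<\alpha$, check that for $p+q>1$ this is exactly the two inequalities of (\ref{q}), and then use the affine recursion's limit ($\tfrac{\alpha}{1-p-q}>0$ when $p+q<1$, $+\infty$ otherwise) to extract $j_0$. Your fixed-point formulation $\tau_j-\tau^{\ast}=b^{\,j}(\tau_0-\tau^{\ast})$ is just a repackaging of the paper's geometric-series computation, and your explicit remark that $j_0\ge1$ requires $\tau_0<-\tfrac{\alpha}{p}$ (the case $\tau_0\ge-\tfrac{\alpha}{p}$ being the one where no iteration is needed) is a point the paper leaves implicit.
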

\begin{proof}
%For $p+q<1+\frac{\alpha}{N-2}$,
Since $q \in (0,1)$, we have that
$\tau_1-\tau_0>0$ is equivalent to
\begin{equation}\label{00000}
 \tau_0(1-p-q)<\alpha.
\end{equation}
We observe that (\ref{00000}) is obvious for $p+q\le 1$,
 and when $p+q>1$, (\ref{00000}) holds if
$$\frac{N-\alpha}{1-q}<\frac{\alpha}{p+q-1}\quad{\rm and}\quad N-2<\frac{\alpha}{p+q-1},$$
which is exactly equivalent to condition (\ref{q}).
%$$ (1-\frac{\alpha}N)p+q<1 \quad{\rm and}\quad p+q<1+\frac{\alpha}{N-2}.$$
Notice also that
\begin{equation}\label{2.2}
\tau_j-\tau_{j-1} = \frac{p}{1-q}(\tau_{j-1}-\tau_{j-2})=\left(\frac{p}{1-q}\right)^{j-1} (\tau_1-\tau_0),
\end{equation}
which implies that the sequence $\{\tau_j\}_j$ is  increasing under the condition (\ref{q}).
If $\frac{p}{1-q}\ge1 $, the conclusion is clear. If $\frac{p}{1-q}\in(0,1)$,
%we have that in the case that $\tau_1\ge-\frac{\alpha}{p}$, we are done, and in the case that $\tau_1<0$,
it deduces from (\ref{2.2}) that
\begin{eqnarray*}
\tau_j  &=&  \frac{1-\left(\frac{p}{1-q}\right)^j}{1-\frac{p}{1-q}}(\tau_1-\tau_0)+\tau_0\\
&\to&\frac{1-q}{1-p-q}(\tau_1-\tau_0)+\tau_0=\frac{\alpha}{1-p-q}>0,\quad {\rm as}\quad j\to+\infty,
\end{eqnarray*}
 then there exists $j_0>0$ satisfying (\ref{2.3}).
\end{proof}

 \begin{proposition}\label{pr e 1}
Let $u$ be a nonnegative and nontrivial classical solution of (\ref{eq 1.1}),
then there exists $b_0>0$ such that
\begin{equation}\label{e 1}
u(x)\ge b_0 |x|^{\tau_0} \quad {\rm for} \quad |x|\ge 1,
\end{equation}
 where $\tau_0$ is given by (\ref{t0}).
 \end{proposition}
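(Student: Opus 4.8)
The plan is to establish the lower bound \eqref{e 1} by a bootstrap/iteration argument that successively improves the polynomial decay estimate on $u$ from below, starting from a weak initial bound and using the sequence $\{\tau_j\}$ of Lemma \ref{lm 2.1} to track the improvement. First I would note that since $u$ is a nonnegative nontrivial classical solution of \eqref{eq 1.1}, it is in particular a positive supersolution of $-\Delta u + u \ge 0$ away from the origin, and a comparison argument against the fundamental solution $|x|^{2-N}$ (or against $e^{-|x|}|x|^{-(N-2)/2}$-type barriers) gives an initial crude lower bound, something like $u(x) \ge c|x|^{-(N-2)}$ for $|x| \ge 1$; this gives $\tau_0 \ge -(N-2)$. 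The other competing exponent $-(N-\alpha)/(1-q)$ in the definition \eqref{t0} of $\tau_0$ should come from the nonlocal term: if $u(x) \ge c|x|^\tau$ for large $|x|$, then $u^p(x) \ge c^p|x|^{p\tau}$, and one estimates $I_\alpha[u^p](x) \gtrsim |x|^{p\tau + \alpha}$ (when $p\tau + \alpha > -N$, so the Riesz integral over the far region dominates; if $u^p \in L^1$ near infinity one gets $I_\alpha[u^p](x) \gtrsim |x|^{\alpha - N}$ instead). Feeding this back, $u$ satisfies $-\Delta u + u \ge I_\alpha[u^p]u^q \gtrsim |x|^{p\tau + \alpha}|x|^{q\tau}$ away from $0$, and a subsolution/barrier of the form $|x|^{s}$ with $-\Delta(|x|^s) + |x|^s \sim |x|^s$ for $s$ in the relevant range yields the improved bound $u(x) \gtrsim |x|^{(p+q)\tau + \alpha}$ for large $|x|$ — but one must be careful: the effective improvement is governed by $\tau \mapsto \frac{\alpha}{1-q} + \frac{p}{1-q}\tau$, which is exactly the recursion \eqref{2.1}, after absorbing the $q\tau$ on the right against the $-\Delta u + u$ on the left in the correct way (the $u^q$ factor is what produces the $1-q$ in the denominator, via an inequality of the form: if $w$ is a supersolution and $w \ge $ a power, then $w^{1-q} \gtrsim$ the forcing).

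More precisely, the core of the iteration step is: given $u(x) \ge b_j |x|^{\tau_j}$ for $|x| \ge R_j$ with $\tau_j < -\alpha/p$ (so that $u^p$ is integrable at infinity is NOT yet guaranteed — in fact $\tau_j < -\alpha/p$ means $p\tau_j + \alpha < 0$, which is the regime where the "near-$x$" contribution to $I_\alpha[u^p](x)$ behaves like $|x|^{p\tau_j + \alpha}$), one derives a pointwise lower bound $I_\alpha[u^p](x) \ge c|x|^{p\tau_j + \alpha}$ for $|x|$ large by restricting the Riesz integral to an annulus $\{|y| \sim |x|\}$ or to $\{|y-x| \le |x|/2\}$ and using $u^p(y) \ge b_j^p|y|^{p\tau_j}$ there. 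Then $v := u$ solves $-\Delta v + v \ge c|x|^{p\tau_j + \alpha} v^q$ for $|x| \ge R_j$; writing this as a differential inequality for the spherical average or comparing directly with the radial barrier $\phi(x) = a|x|^{\tau_{j+1}}$ (noting $\tau_{j+1} > \tau_j > \tau_0 \ge -(N-2) > 2-N$, so that $-\Delta \phi$ is lower order compared to $\phi$ near infinity, hence $-\Delta\phi + \phi \le 2a|x|^{\tau_{j+1}}$ for $|x|$ large), we want $2a|x|^{\tau_{j+1}} \le c|x|^{p\tau_j + \alpha}(a|x|^{\tau_{j+1}})^q$, i.e. $\tau_{j+1}(1-q) \le p\tau_j + \alpha$ with a constant condition on $a$; equality is the recursion \eqref{2.1}, so choosing $a$ small the comparison principle on $\{|x| \ge R_{j+1}\}$ (using that $u \ge \phi$ on the boundary sphere after shrinking $a$, and $u > 0 = \lim \phi$ at infinity — here one uses $\lim_{|x|\to\infty} u = 0$ only to get that $u - \phi$ cannot have a negative interior minimum escaping to infinity, which needs a small refinement since both tend to $0$; a standard trick is to compare with $\phi_\epsilon = a|x|^{\tau_{j+1}} - \epsilon|x|^{2-N}$ and let $\epsilon \to 0$) yields $u \ge a|x|^{\tau_{j+1}}$ for $|x| \ge R_{j+1}$, i.e. the bound at level $j+1$.

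I would iterate this for $j = 0, 1, \dots, j_0 - 1$, where $j_0$ is provided by Lemma \ref{lm 2.1} as the first index with $\tau_{j_0} \ge -\alpha/p$; wait — that is the regime used in the nonexistence proof, where one wants to reach $p\tau_j + \alpha \ge 0$ so that $u^p$ fails to be integrable and the Riesz potential blows up, contradicting decay. For the present Proposition, which only asserts the clean lower bound $u \ge b_0|x|^{\tau_0}$, I actually only need the single base case: the initial step producing $u(x) \ge b_0|x|^{\tau_0}$ from the supersolution property, where $\tau_0 = -\max\{(N-\alpha)/(1-q),\, N-2\}$ already accounts for whichever of the two barrier mechanisms (linear fundamental solution versus the first application of the nonlocal term with the trivial bound $u \le \|u\|_\infty$ locally, giving $I_\alpha[u^p] \gtrsim |x|^{\alpha-N}$ near infinity provided $u^p$ has enough mass, hence $-\Delta u + u \gtrsim |x|^{\alpha - N} u^q$, leading via the barrier $|x|^{-(N-\alpha)/(1-q)}$) gives the slower decay, i.e. the larger (less negative) exponent. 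So the structure is: (1) comparison with $c|x|^{2-N}$-type barrier gives $u \gtrsim |x|^{2-N}$ hence certainly $u \gtrsim |x|^{\tau_0}$ if $\tau_0 = 2-N$; (2) if instead $\tau_0 = -(N-\alpha)/(1-q)$, first get $u \gtrsim |x|^{2-N}$, deduce $I_\alpha[u^p](x) \ge c|x|^{\alpha - N}$ for $|x| \ge 1$ (the far-field integral of a nonnegative nontrivial $L^1_{\mathrm{loc}}$ function that does not decay too fast — here one should be slightly careful and may want $u^p$ to have positive mass on some fixed ball, which follows from $u$ being nontrivial and positive), then run one barrier-comparison step with $\phi = a|x|^{-(N-\alpha)/(1-q)}$ to conclude $u \ge b_0|x|^{\tau_0}$.

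The main obstacle I anticipate is the comparison/maximum-principle step on the unbounded exterior domain $\{|x| \ge R\}$: since both $u$ and the barrier $\phi$ tend to $0$ at infinity, a naive comparison principle does not immediately apply, and one must either (a) use the operator $-\Delta + \mathrm{Id}$ whose zero-order term makes $0$ a strict subsolution-friendly situation and invoke a Phragmén–Lindelöf-type argument, or (b) perturb $\phi$ by a small multiple of the genuinely faster-decaying $|x|^{2-N}$ (strictly dominated by $u$) and pass to the limit. A second, more technical point is making the lower bound $I_\alpha[u^p](x) \gtrsim |x|^{p\tau + \alpha}$ (resp. $|x|^{\alpha - N}$) rigorous and uniform for $|x| \ge 1$: this requires splitting the Riesz integral and carefully estimating the contribution of the region where $u^p(y) \ge b^p|y|^{p\tau}$ is known to hold, i.e. $|y| \ge R$, against the kernel $|x-y|^{\alpha - N}$; for $|x|$ large one localizes to $|y - x| \le |x|/2$ where both $|y| \sim |x|$ and $|x - y|^{\alpha - N}$ can be integrated to give the stated power, which is routine but needs the exponent bookkeeping $p\tau + \alpha > -N$ (guaranteed precisely because we stay below the threshold $\tau < -\alpha/p$ would give $p\tau + \alpha < 0 > -N$... actually $p\tau + \alpha > -N \iff \tau > -(N+\alpha)/p$, which holds since $\tau_0 \ge 2-N > -(N+\alpha)/p$ under the standing assumptions — this should be checked but is benign).
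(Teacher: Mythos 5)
Your mechanism for the exponent $-\frac{N-\alpha}{1-q}$ --- namely $I_\alpha[u^p](x)\gtrsim \min\{1,|x|^{\alpha-N}\}$ from the positivity of $u^p$ on a fixed ball off the origin, followed by a sublinear barrier comparison against $a|x|^{-\frac{N-\alpha}{1-q}}$ with $a$ small --- is exactly the paper's argument, and your remarks on how to run the comparison on an exterior domain (the set where $u-\phi$ dips below half its negative infimum is bounded because both functions vanish at infinity) are in the right spirit. But your base step is wrong, and it infects the whole case analysis: a positive function with $-\Delta u+u\ge 0$ and $u\to 0$ at infinity is \emph{not} bounded below by $c|x|^{2-N}$. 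The fundamental solution of $-\Delta+\mathrm{Id}$ decays like $e^{-|x|}|x|^{-(N-1)/2}$, not polynomially, and for instance $e^{-|x|/2}$ is a positive supersolution of $-\Delta+\mathrm{Id}$ tending to zero which violates your claimed bound. This is precisely the subtlety of the sublinear regime that the paper stresses: no polynomial lower bound can come from the linear part alone; it must come from the forcing $I_\alpha[u^p]u^q$ together with $q<1$. Consequently your branch ``(1): $\tau_0=2-N$, handled by comparison with the fundamental solution'' has no proof, and your branch (2) also routes through the unproven $u\gtrsim|x|^{2-N}$ (though there it is dispensable, since the Riesz lower bound already follows from $u>0$ on a fixed ball, as you note parenthetically).

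The repair --- and the paper's actual proof --- is a \emph{single} barrier $v=c|x|^{\tau_0}$ with $\tau_0=-\max\{\frac{N-\alpha}{1-q},\,N-2\}$, used in both cases at once: the condition $\tau_0\le 2-N$ gives $-\Delta v\le 0$, and the condition $\tau_0\le-\frac{N-\alpha}{1-q}$ together with $c\le (c_2q)^{\frac{1}{1-q}}$ gives $c_2|x|^{\alpha-N}v^q-v\ge 0$, whence $-\Delta v\le 0\le c_2|x|^{\alpha-N}v^q-v$ and $v$ is a subsolution of the equation $-\Delta w+w=c_2|x|^{\alpha-N}w^q$ that $u$ supersolves. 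The max in the definition of $\tau_0$ exists precisely so that both inequalities hold simultaneously; neither exponent is produced by $-\Delta+\mathrm{Id}$ by itself. You should also make explicit why the comparison applies to this non-monotone nonlinearity: $t\mapsto c_2r^{\alpha-N}t^q-t$ is decreasing only on $\bigl(0,(c_2q)^{\frac1{1-q}}r^{\tau_0}\bigr)$, which is the real reason the barrier constant must be taken small. Finally, the long iteration you sketch is the content of Proposition 2.2 (used for the nonexistence theorem), not of this proposition; only the base step is at issue here, as you eventually recognize.
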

\begin{proof}
Since $u$ is nonnegative, then $I_\alpha[u^p]u^q\ge 0$ and by the Strong Maximum Principle we have that
%u\equiv0\quad{\rm or}\quad
$ u>0$ in$ \R^N\setminus\{0\} $ since $u$ is a nontrivial solution.
%which, combining the fact that $u$ is nontrivial, we have that $$u>0\qquad{\rm in}\quad \R^N\setminus\{0\}.$$

{\it Step 1. } We claim that there exists $c_2>0$ such that
\begin{equation}\label{st1}
 I_\alpha[u^p](x)\ge c_2\min\{1,\, |x|^{\alpha-N}\},\qquad\forall x\in \R^N\setminus\{0\}.
\end{equation}
In fact, fix a point $x_0$ with $|x_0|=1$,  there exists $c_3>0$ such that
$$u^p\ge c_3\quad{\rm in}\quad \overline{B_{\frac12}(x_0)},$$
then for $|x|\ge2$ and $|y|<\frac32$, we have that $|x-y|\le |x|+|y|\le 2|x|$ and then
\begin{eqnarray*}
  I_\alpha[u^p](x) &\ge&  c_3\int_{B_{\frac12}(x_0)} \frac{1}{|x-y|^{N-\alpha}}dy \\
    &\ge& c_3 (2|x|)^{\alpha-N}  \int_{B_{\frac12}(x_0)}dy
  \ge  c_4 |x|^{\alpha-N}.
\end{eqnarray*}
For $|x|\le2$ and $|y|<\frac32$, we have that $|x-y| \le 4$ and
  \begin{equation}\label{2.4-3}
  I_\alpha[u^p](x) \ge c_3 \int_{B_{\frac12}(x_0)} \frac{1}{|x-y|^{N-\alpha}} dy\ge 4^{\alpha-N}c_3|B_{1/2}(0)|.
  \end{equation}
Therefore,  (\ref{st1}) holds.

 Let
 $$v_1(x)=\arraycolsep=1pt\left\{
\begin{array}{lll}
 \displaystyle   c_4  |x|^{\tau_0} \quad
 &{\rm if}\quad  |x|\ge 1,\\[2mm]
 \phantom{  }
  0  \quad
 &{\rm if}\quad  |x|< 1,
\end{array}
\right.
$$
where
$$c_4=\min\left\{(c_2 q)^{\frac1{1-q}},\, \min_{x\in \partial B_1(0)} u(x)\right\}.$$

{\it Step 2. } We show that $v_1$ satisfies
 \begin{equation}\label{eq 2.01}
 \displaystyle    -\Delta v_1\le c_2|x|^{\alpha-N}v_1^q-v_1\quad
  {\rm in}\quad  \mathbb{R}^N\setminus \overline{B_1(0)}.
 \end{equation}
Since $\tau_0\le 2-N$, then for $|x|>1$,
$$ -\Delta v_1\le 0$$
and
\begin{eqnarray*}
  c_2|x|^{\alpha-N}v_1^q-v_1 & =& v_1^q[c_2|x|^{\alpha-N}-v_1^{1-q}(x)]  \\
  &\ge&  v_1^q \left[c_2|x|^{\alpha-N}-c_2q |x|^{\tau_0(1-q)}\right]
   \ge 0.
\end{eqnarray*}

{\it Step 3. } We prove finally that
\begin{equation}\label{2.4-2}
 u\ge v_1\quad {\rm in}\ \ \R^N\setminus \overline{B_1(0)}.
\end{equation}

Let $f(r,t)=c_2r^{\alpha-N} t^q-t$, then
$t\mapsto f(r,t)$ is decreasing in
$(0,\, (c_2 q)^{\frac1{1-q}}  r^{\tau_0})$.
In fact,
$$\partial _t f(r,t)=c_2q r^{\alpha-N} t^{q-1}-1<0\quad {\rm for }\quad t \in (0,\, (c_2 q)^{\frac1{1-q}}  r^{\tau_0}).$$

 If (\ref{2.4-2}) fails, since $u\ge v_1$ on $\partial B_1(0)$,  we may assume that there exists some point $x$ in $\R^N\setminus \overline{B_1(0)}$ such that
$u(x)-v_1(x)<0$, denoting
$$l_0=\inf_{x\in\R^N\setminus B_1(0)}(u-v_1)(x)<0 $$
and the set
$A_0=\{x\in\R^N\setminus B_1(0): \ u(x)-v_1(x)<\frac{l_0}2\}$
is a nonempty, $C^2$- bounded open set in $\R^N\setminus \overline{B_1(0)}$.
We see that for any $x\in A_0$, $0<u(x)<v_1(x)\le (c_2 q)^{\frac1{1-q}}  |x|^{\tau_0}$,
so by the fact that $t\mapsto f(r,t)$ is decreasing in
$(0,\, (c_2 q)^{\frac1{1-q}}  r^{\tau_0})$, we obtain that
$$c_2|x|^{\alpha-N}u^q(x)-u(x)\ge c_2|x|^{\alpha-N}v_1^q(x)-v_1(x)$$
and
$$-\Delta (u-v_1-\frac{l_0}{2})\ge 0\quad {\rm in} \ \ A_0,\quad u-v_1-\frac{l_0}{2}=0\quad {\rm on}\ \ \partial A_0.$$
By Maximum Principle, we have that $u\ge v_1+\frac{l_0}{2}$ in $A_0$, which is impossible with the definition of $A_0$.
This ends the proof of the lemma.
\end{proof}

\begin{remark}\label{re 2.1}
 Notice that the decay estimate   (\ref{e 1})  holds without the restrictions (\ref{q}), so   (\ref{e 1})
could provide a lower decay estimate at infinity  in the case of (\ref{1.2}).
\end{remark}

The following proposition is an improvement of the decay of $u$ at infinity.

\begin{proposition}\label{pr e 2}
 Assume that    $p>0,\,  q\in(0, 1)$ verify (\ref{q}).
 Let   $\{\tau_j\}_j$ defined in (\ref{2.1}) with $\tau_0$ given by (\ref{t0}),  $u$ be a classical
solution of (\ref{eq 1.1}) satisfying
$$u(x)\ge b_j|x|^{\tau_{j}},\quad \forall x\in \R^N\setminus \overline{B_1(0)}$$
for some $b_j>0$ and $j\le j_0-1$.
Then   there exists $b_{j+1}>0$   such that
$$u(x)\ge b_{j+1} |x|^{\tau_{j+1}},\quad \forall x\in \R^N\setminus \overline{B_1(0)}.$$

\end{proposition}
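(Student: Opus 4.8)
The plan is to imitate the subsolution argument of Proposition \ref{pr e 1}: upgrade the lower bound $u\ge b_j|x|^{\tau_j}$ into a lower bound for the Riesz potential $I_\alpha[u^p]$, feed this into the equation, and compare $u$ with the explicit power $v_{j+1}(x)=b_{j+1}|x|^{\tau_{j+1}}$. For the first step, note that since $j\le j_0-1$ Lemma \ref{lm 2.1} gives $\tau_j\le\tau_{j_0-1}<-\alpha/p$, hence $\alpha+p\tau_j<0$; put $\gamma_j:=\max\{\alpha+p\tau_j,\,\alpha-N\}<0$. From $u^p(y)\ge b_j^p|y|^{p\tau_j}$ on $\{|y|\ge1\}$, restricting the integral defining $I_\alpha[u^p](x)$ to the annulus $\{1\le|y|\le|x|/2\}$ and bounding $|x-y|\le 2|x|$ there, one gets for $|x|$ large
$$I_\alpha[u^p](x)\ \ge\ c\,|x|^{\alpha-N}\int_1^{|x|/2}r^{\,p\tau_j+N-1}\,dr\ \ge\ c'\,|x|^{\gamma_j},$$
the cases $p\tau_j+N>0$ and $p\tau_j+N\le0$ producing $|x|^{\alpha+p\tau_j}$ and $|x|^{\alpha-N}$ respectively. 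Combining this with Step 1 of Proposition \ref{pr e 1} on the bounded range of $|x|$ (where $|x|^{\gamma_j}\le1$ because $\gamma_j<0$) yields $c_*>0$ with $I_\alpha[u^p](x)\ge c_*|x|^{\gamma_j}$ for all $|x|\ge1$, and therefore $-\Delta u+u\ge c_*|x|^{\gamma_j}u^q$ in $\R^N\setminus\overline{B_1(0)}$.

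Next I would build the subsolution. Since the recursion (\ref{2.1}) reads $(1-q)\tau_{j+1}=\alpha+p\tau_j\le\gamma_j$ we have $\tau_{j+1}\le\gamma_j/(1-q)$ and $\tau_{j+1}<0$, so $\gamma_j+q\tau_{j+1}\ge\tau_{j+1}>\tau_{j+1}-2$; consequently, for $|x|\ge1$ both $|x|^{\tau_{j+1}}$ and $|x|^{\tau_{j+1}-2}$ are dominated by $|x|^{\gamma_j+q\tau_{j+1}}$, and a direct computation gives
$$-\Delta v_{j+1}+v_{j+1}-c_*|x|^{\gamma_j}v_{j+1}^q\ \le\ b_{j+1}^{\,q}\,|x|^{\gamma_j+q\tau_{j+1}}\Big(b_{j+1}^{\,1-q}\big(|\tau_{j+1}(\tau_{j+1}+N-2)|+1\big)-c_*\Big).$$
Because $q<1$ one may choose $b_{j+1}\in\big(0,\ \min_{\partial B_1(0)}u\big]$ small enough that the last factor is non‑positive; then $v_{j+1}$ is a subsolution of $-\Delta w+w=c_*|x|^{\gamma_j}w^q$ on $\R^N\setminus\overline{B_1(0)}$ with $v_{j+1}\le u$ on $\partial B_1(0)$. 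It is exactly this absorption of the lower‑order terms by the sublinear term that uses $q<1$.

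Finally I would prove $u\ge v_{j+1}$ on $\R^N\setminus\overline{B_1(0)}$, which is the assertion with this $b_{j+1}$. As in Step 3 of Proposition \ref{pr e 1}, if $l_0:=\inf_{x\in\R^N\setminus B_1(0)}(u-v_{j+1})(x)<0$, the set $A_0=\{|x|>1:\ u-v_{j+1}<l_0/2\}$ is non‑empty, bounded (both $u$ and $v_{j+1}$ vanish at infinity) and stays away from $\partial B_1(0)$; on $A_0$ one has $0<u<v_{j+1}$ together with the a priori bound $u\ge b_j|x|^{\tau_j}$, and one wants $-\Delta(u-v_{j+1})\ge0$ in $A_0$, which contradicts the minimum principle since $u-v_{j+1}=l_0/2$ on $\partial A_0$. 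The main obstacle — and the only point where the proof departs from Proposition \ref{pr e 1} — is precisely this inequality: when $\tau_{j+1}\le 2-N$ the subsolution $v_{j+1}$ is subharmonic and the comparison goes through verbatim, but when $\tau_{j+1}>2-N$ (which can happen under (\ref{q})) $v_{j+1}$ is superharmonic, so $-\Delta v_{j+1}=O\big(b_{j+1}|x|^{\tau_{j+1}-2}\big)$ has the unfavourable sign and cannot simply be dropped; one must offset it by the gain $c_*|x|^{\gamma_j}u^q-u\ge c_*b_j^q|x|^{\gamma_j+q\tau_j}-v_{j+1}$ on the bounded set $A_0$, which again forces $b_{j+1}$ small and is where sublinearity re‑enters. (An alternative that avoids the comparison principle: from $u\ge\mathcal{G}*\big(I_\alpha[u^p]u^q\,\mathbf 1_{\{|\cdot|\ge1\}}\big)$, with $\mathcal{G}$ the Bessel kernel of $-\Delta+1$, iterate the exponent map $\sigma\mapsto\gamma_j+q\sigma$ to push the decay of $u$ up to $\gamma_j/(1-q)\ge\tau_{j+1}$; in the borderline case $\gamma_j/(1-q)=\tau_{j+1}$ one closes the remaining $\varepsilon$‑gap by a single further bootstrap using the improved bound on $u^p$ and condition (\ref{q}).)
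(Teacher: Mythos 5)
Your Steps 1 and 2 follow the paper's own proof (up to the harmless replacement of the exponent $\alpha+p\tau_j$ by $\gamma_j=\max\{\alpha+p\tau_j,\alpha-N\}$) and are correct: the lower bound $I_\alpha[u^p](x)\ge c_*|x|^{\gamma_j}$ for $|x|\ge1$ and the subsolution inequality $-\Delta v_{j+1}+v_{j+1}\le c_*|x|^{\gamma_j}v_{j+1}^q$ for $b_{j+1}$ small both hold. The genuine gap is in Step 3, in the case $\tau_{j+1}>2-N$. Your ``offset'' requires
\begin{equation*}
c_*b_j^q|x|^{\gamma_j+q\tau_j}\ \ge\ v_{j+1}(x)-\Delta v_{j+1}(x)\ \ge\ b_{j+1}|x|^{\tau_{j+1}}\qquad\text{on }A_0,
\end{equation*}
but when $\gamma_j=\alpha+p\tau_j$ (the generic case) the recursion gives $\gamma_j+q\tau_j=(1-q)\tau_{j+1}+q\tau_j=\tau_{j+1}-q(\tau_{j+1}-\tau_j)<\tau_{j+1}$, since $\{\tau_j\}$ is strictly increasing under (\ref{q}); hence for any fixed $b_{j+1}>0$ the required inequality fails as soon as $|x|>\bigl(c_*b_j^q/b_{j+1}\bigr)^{1/(q(\tau_{j+1}-\tau_j))}$. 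Appealing to the boundedness of $A_0$ to choose $b_{j+1}$ afterwards is circular: $A_0$ is defined through $v_{j+1}$, hence through $b_{j+1}$, and nothing controls its diameter a priori. So the comparison step does not close as written.

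The point you stumbled on is in fact the delicate heart of the proposition: one needs a comparison principle for the sublinear operator $w\mapsto-\Delta w+w-c_*|x|^{\gamma_j}w^q$, and neither dropping $-\Delta v_{j+1}$ nor replacing $u^q$ by $b_j^q|x|^{q\tau_j}$ supplies it. The paper closes Step 3 by combining your Step 2 inequality $c_*|x|^{\gamma_j}v_{j+1}^q-v_{j+1}\ge-\Delta v_{j+1}$ with the claim that $t\mapsto c_*r^{\gamma}t^q-t$ is decreasing on the range where $0<u<v_{j+1}$ lie; be aware, however, that $\partial_t\bigl(c_*r^{\gamma}t^q-t\bigr)=c_*qr^{\gamma}t^{q-1}-1$ is positive, not negative, for $t$ below $(c_*q)^{1/(1-q)}r^{\gamma/(1-q)}$, so that monotonicity also needs care. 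A robust way to finish, consistent with the tools the paper itself uses in Section 3.3, is to linearize: set $w=u^{1-q}$ and use $-\Delta w\ge(1-q)u^{-q}(-\Delta u)$ to deduce from Step 1 that $-\Delta w+(1-q)w\ge(1-q)c_*|x|^{\gamma_j}$ in $\R^N\setminus\overline{B_1(0)}$; the linear comparison principle for $-\Delta+(1-q)$ in the exterior domain together with Lemma \ref{lm 4.4} then gives $\liminf_{|x|\to\infty}w(x)|x|^{-\gamma_j}\ge c_*$, i.e. $u(x)\ge c|x|^{\gamma_j/(1-q)}\ge c|x|^{\tau_{j+1}}$ for $|x|$ large, and the bound on all of $\{|x|\ge1\}$ follows from the positivity and continuity of $u$ on the remaining compact annulus.
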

\begin{proof}
{\it Step 1. } We prove that there exists $a_j>0$ such that
$$I_\alpha[u^p](x)\ge a_j \min\{1,\, |x|^{\alpha+\tau_jp}\},\qquad\forall x\in \R^N\setminus\{0\}.$$
 We observe that for $|x|\ge 1$
\begin{eqnarray*}
  I_\alpha[u^p](x) &\ge&  b_j^p\int_{\R^N\setminus B_1(0)}  |x-y|^{\alpha -N}|y|^{\tau_jp}dy \\
    &=& b_j^p |x|^{\alpha+\tau_jp}  \int_{ \R^N\setminus B_{\frac1{|x|}}(0) } |e_x-z|^{\alpha -N}|z|^{\tau_jp}dz
    \\
    &\ge& 2^{\alpha-N} b_j^p |x|^{\alpha+\tau_jp} \int_{ \R^N\setminus B_2(0) }|z|^{\alpha -N}|z|^{\tau_jp}dz
\\&  \ge&  a_j |x|^{\alpha+\tau_jp},
\end{eqnarray*}
where $e_x=\frac{x}{|x|}$.
Together with (\ref{2.4-3}) for $|x|<1$, we conclude that
$$I_\alpha[u^p](x)\ge a_j \min\{1,\, |x|^{\alpha+\tau_jp}\},\qquad\forall x\in \R^N\setminus\{0\}.$$

 Let
 $$v_t(x)=\arraycolsep=1pt\left\{
\begin{array}{lll}
 \displaystyle   t  |x|^{\tau_{j+1}} \quad
 &{\rm if}\quad  |x|\ge 1,\\[2mm]
 \phantom{  }
  0  \quad
 &{\rm if}\quad  |x|< 1,
\end{array}
\right.
$$
where $t>0$ will be chosen later.

{\it Step 2. } We claim that there exists $t_1>0$ such that for $t\in(0,t_1)$£¬
 \begin{equation}\label{eq 2.01j}
 \displaystyle    -\Delta v_t \le a_j|x|^{\alpha +\tau_jp}v_t^q-v_t\quad
  {\rm in}\quad  \mathbb{R}^N\setminus \overline{B_1(0)}.
 \end{equation}
For $|x|>1$,
 \begin{eqnarray*}
  a_j|x|^{\alpha +\tau_jp} v_t^q-v_t  & =&  t^q\left[a_{j}|x|^{\alpha+p\tau_j+q\tau_{j+1}}-t^{1-q}|x|^ {\tau_{j+1}} \right]   \\
  &\ge& \frac12 a_jt^q |x|^{\alpha+p\tau_j+q\tau_{j+1}},
\end{eqnarray*}
where   the last inequality  requires that
$$t\le  \left(\frac{a_j}2\right)^{\frac1{1-q}}\quad {\rm and }\quad \alpha+p\tau_j+q\tau_{j+1}=\tau_{j+1}.$$
On the other hand, we have that
$$-\Delta v_t(x)\le t |\tau_j(\tau_j+N-2)| |x|^{\tau_{j+1}-2} \quad{\rm for}\quad |x|> 1.$$
Then we obtain (\ref{eq 2.01j}) if we choose
$$  t_1= \min\left\{\left(\frac{a_j}{2|\tau_j(\tau_j+N-2)|+1}\right)^{\frac1{1-q}},\,\left(\frac{a_j}2\right)^{\frac1{1-q}},\, (a_{j} q)^{\frac1{1-q}},\, \min_{x\in \partial B_1(0)} u(x)\right\}.$$

{\it Step 3. } We prove finally that
\begin{equation}\label{2.4-4}
 u\ge v_t \quad {\rm in}\ \ \R^N\setminus \overline{B_1(0)}.
\end{equation}
The proof is very similar to prove (\ref{2.4-2}).
Let $f(r,t)=a_jr^{\alpha+\tau_jp} t^q-t$, then
$t\mapsto f(r,t)$ is decreasing in
$(0,\, (a_j q)^{\frac1{1-q}}  r^{\tau_{j+1}})$.
%In fact, $$\partial _t f(r,t)=a_jq r^{\alpha +\tau_jp} t^{q-1}-1<0\quad {\rm for }\quad (0,\, (a_j q)^{\frac1{1-q}}  r^{\tau_{j+1}}).$$
If (\ref{2.4-4}) fails, since $u\ge v_t $ on $\partial B_1(0)$,  we may assume that there exists some point in $\R^N\setminus \overline{B_1(0)}$ such that
$u(x)-v_t (x)<0$, then
$$l_j=\inf_{x\in\R^N\setminus B_1(0)}(u-v_t )(x)<0 $$
and the set
$A_j=\{x\in\R^N\setminus \overline{B_1(0)}: \ u(x)-v_t (x)<\frac{l_j}2\}$
is a nonempty, $C^2$- bounded open set in $\R^N\setminus \overline{B_1(0)}$.
We see that for any $x\in A_j$, $0<u(x)<v_t (x)\le (a_j q)^{\frac1{1-q}}  |x|^{\tau_{j+1}}$,
so
$$a_j|x|^{\alpha+\tau_jp}u^q(x)-u(x)\ge a_j|x|^{\alpha+\tau_jp}v_t^q(x)-v_t(x)$$
and
$$-\Delta (u-v_t-\frac{l_j}2)\ge 0\quad {\rm in} \ \ A_j,\quad u-v_t-\frac{l_j}2=0\quad {\rm on}\ \ \partial A_j.$$
By Maximum Principle, we have that $u\ge v_t+\frac{l_j}2$ in $A_j$, which is impossible with the definition of $A_j$.
\end{proof}

\noindent{\bf Proof of Theorem \ref{teo 0}.}
By contradiction, we assume that (\ref{eq 1.1}) has a nonnegative and nontrivial classical solution   $u\ge 0$, then $I_\alpha[u^p]$ is well defined in $\R^N\setminus\{0\}$.
From Proposition \ref{pr e 1}, we have that
$$u(x)\ge b_0|x|^{\tau_0}\quad{\rm for}  \ \ |x|>1.$$
Under the assumptions of Theorem \ref{teo 0}, we may repeat  Proposition  \ref{pr e 2}, we
can improve the decay estimate such that
$$u(x)\ge b_{j_0}|x|^{\tau_{j_0}}\quad{\rm for}  \ \ |x|>1,$$
where $\tau_{j_0}\ge -\frac{\alpha}{p}$.
Now fix a point $x_0\in \R^N$ with $|x_0|=1$ and we have that
\begin{eqnarray*}
I_\alpha[u^p](x_0)  \ge  b_{j_0}^p2^{\alpha-N}\int_{\R^N\setminus B_2(0)} |y|^{\alpha-N+\tau_{j_0} p}dy=\infty
\end{eqnarray*}
by  $\alpha+\tau_{j_0} p\ge0$.
That is impossible, since $u$ is a classical solution of (\ref{eq 1.1}) in $\R^N\setminus\{0\}$.\hfill$\Box$

%\smallskip

\hskip0.5cm
\section{Classification and Existence}
\subsection{Classification}

The classification of singularities of (\ref{eq 1.1})  follows  \cite[Theorem 1.1]{CZ} where we have used essentially the following result.

\begin{theorem}\label{teo 2.1}\cite[Theorrem 1.1]{BL}
Assume that $f\in L^1_{loc}(\R^N)\cap C^1(\R^N\setminus \{0\})$
and  $u\in L^1_{loc}(\R^N)$ is  a positive classical solution of
\begin{equation}\label{eq 2.1}
 -\Delta   u+  u =f\quad
  {\rm in}\quad  \mathbb{R}^N\setminus\{0\}.
\end{equation}

Then  there exists $k\ge0$
  such that  $u$ is a distributional solution of
 \begin{equation}\label{eq 2.2}
   -\Delta u+u=f+k\delta_0\quad
  {\rm in}\quad \R^N,
\end{equation}
that is,
$$
     \int_{\R^N} [u (-\Delta  \xi)
     +u\xi-f\xi]\, dx=k\xi(0),\quad \forall \xi\in C^\infty_c(\R^N).
$$

\end{theorem}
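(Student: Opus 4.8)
The plan is to derive the distributional identity by testing the classical equation (\ref{eq 2.1}) against cut-off functions that vanish in a shrinking neighbourhood of $0$ and then letting that neighbourhood collapse. Fix $\xi\in C^\infty_c(\R^N)$ and, for $\e\in(0,1)$, set $\zeta_\e(x)=\zeta(|x|/\e)$, where $\zeta\colon[0,\infty)\to[0,1]$ is smooth, equal to $0$ on $[0,1]$ and to $1$ on $[2,\infty)$, and flat at $r=1$ and $r=2$; thus $|\nabla\zeta_\e|\le C/\e$ and $|\Delta\zeta_\e|\le C/\e^2$, both supported in $B_{2\e}\setminus B_\e$. Since $\xi\zeta_\e\in C^\infty_c(\R^N\setminus\{0\})$ and $u\in C^2(\R^N\setminus\{0\})$ solves (\ref{eq 2.1}) there, Green's formula yields
\[
\int_{\R^N}\bigl[u(-\Delta\xi)\zeta_\e+u\xi\zeta_\e-f\xi\zeta_\e\bigr]\,dx=2\int_{\R^N}u\,\nabla\xi\cdot\nabla\zeta_\e\,dx+\int_{\R^N}u\,\xi\,\Delta\zeta_\e\,dx .
\]
As $\e\to0^+$ the left-hand side tends to $\int_{\R^N}\bigl[u(-\Delta\xi)+u\xi-f\xi\bigr]\,dx$ by dominated convergence, since $u,f\in L^1_{\loc}(\R^N)$. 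On the right, the first term is $O\bigl(\e^{-1}\!\int_{B_{2\e}}u\bigr)$, and, writing $\xi=\xi(0)+(\xi-\xi(0))$ and using $|\xi(x)-\xi(0)|\le C|x|$, the second term equals $\xi(0)\int_{\R^N}u\,\Delta\zeta_\e\,dx+O\bigl(\e^{-1}\!\int_{B_{2\e}}u\bigr)$. Hence, once
\[
\e^{-1}\int_{B_{2\e}}u\,dx\longrightarrow 0\qquad(\e\to0^+)
\]
is established, $\xi(0)\lim_{\e\to0^+}\int_{\R^N}u\,\Delta\zeta_\e$ exists; evaluating at any fixed $\xi$ with $\xi(0)\neq0$ shows that $k:=\lim_{\e\to0^+}\int_{\R^N}u\,\Delta\zeta_\e\,dx$ exists and is independent of $\xi$ and of the profile $\zeta$, and the displayed identity then reads $\int_{\R^N}\bigl[u(-\Delta\xi)+u\xi-f\xi\bigr]\,dx=k\,\xi(0)$ for every $\xi\in C^\infty_c(\R^N)$. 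It remains to prove the last limit and that $k\ge0$.

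Both facts I would read off the spherical average $\bar u(r):=|\partial B_1|^{-1}r^{1-N}\int_{\partial B_r}u\,dS$, which is $C^2$ on $(0,\infty)$. Averaging $\Delta u=u-f$ over spheres gives the radial relation $(r^{N-1}\bar u')'=r^{N-1}(\bar u-\bar f)$; since $\int_0^1 r^{N-1}\bigl(|\bar u(r)|+|\bar f(r)|\bigr)\,dr\le C\bigl(\norm{u}_{L^1(B_1)}+\norm{f}_{L^1(B_1)}\bigr)<\infty$, integrating it shows that $L:=\lim_{r\to0^+}r^{N-1}\bar u'(r)$ exists, and, using the flatness of $\zeta$ and Green's formula on the annulus, that $k=-|\partial B_1|\,L$. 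If $L>0$ then $\bar u'(r)\ge\tfrac{L}{2}r^{1-N}$ for small $r$, and integrating — here $N\ge3$ is essential — forces $\bar u(r)\to-\infty$ as $r\to0^+$, contradicting $u\ge0$; hence $L\le0$, i.e.\ $k\ge0$. The same dichotomy gives the decay: if $L<0$ then $\bar u(r)=O(r^{2-N})$, while if $L=0$ then $\bar u(r)$ is a bounded term plus $o(r^{2-N})$, so in either case $\int_{B_\e}u=|\partial B_1|\int_0^\e r^{N-1}\bar u(r)\,dr=O(\e^2)$, respectively $o(\e^2)$, and $\e^{-1}\!\int_{B_{2\e}}u\to0$.

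I expect the estimate $\e^{-1}\!\int_{B_{2\e}}u\to0$ to be the only genuine obstacle, and it is exactly here that positivity of $u$ is indispensable: without it, $u$ could behave near $0$ like $x_1|x|^{-N}$, an $L^1_{\loc}(\R^N)$ function that is $(-\Delta)$-harmonic in $\R^N\setminus\{0\}$, for which this term does not vanish and the conclusion fails (one picks up a multiple of $\partial_{x_1}\delta_0$ rather than $k\delta_0$). Everything else reduces to dominated convergence and the elementary one-dimensional analysis above, which incidentally identifies $k=-|\partial B_1|\lim_{r\to0^+}r^{N-1}\bar u'(r)=-\lim_{\e\to0^+}\int_{\partial B_\e}\partial_r u\,dS$, the limiting flux of $u$ through small spheres centred at the origin.
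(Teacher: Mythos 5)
Your argument is correct. Note that the paper supplies no proof of this statement: it is quoted from Brezis--Lions \cite{BL} and used as a black box, so the only meaningful comparison is with the original argument there. Your route --- testing the equation against $\xi\zeta_\e$ with a cut-off vanishing near the origin, reducing everything to the single estimate $\e^{-1}\int_{B_{2\e}}u\,dx\to 0$, and obtaining both that estimate and the sign of $k$ from the ODE $(r^{N-1}\bar u')'=r^{N-1}(\bar u-\bar f)$ for the spherical mean --- is the classical B\^ocher-type proof, and all the steps check out: the integrability $r^{N-1}(\bar u+|\bar f|)\in L^1(0,1)$ needed to produce $L=\lim_{r\to0^+}r^{N-1}\bar u'(r)$ follows from $u,f\in L^1_{\loc}(\R^N)$, positivity rules out $L>0$ exactly as you say (this is where $N\ge3$ and $u\ge0$ enter), and the three cases $L<0$, $L=0$ give $\bar u(r)=O(r^{2-N})+O(1)$, hence $\int_{B_\e}u=O(\e^2)$, which closes the loop. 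The original Brezis--Lions proof is organized differently: it first shows that $\Delta u$ extends to a local measure across the origin, so that the distribution $-\Delta u+u-f$ on $\R^N$ is supported at $\{0\}$ and is therefore a finite combination of derivatives of $\delta_0$, and then discards all terms of order $\ge1$ using $u\in L^1_{\loc}$ and gets $k\ge0$ from positivity. Your spherical-mean computation replaces that distribution-theoretic step by explicit one-dimensional analysis; what it buys in addition is the identification $k=-\lim_{\e\to0^+}\int_{\partial B_\e}\partial_r u\,dS=-|\partial B_1|\,L$ and the asymptotic $\bar u(r)=\frac{k}{(N-2)|\partial B_1|}\,r^{2-N}+o(r^{2-N})+O(1)$, which is precisely the behaviour (\ref{1.5}) that the paper later invokes. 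Your closing remark that positivity is indispensable (the harmonic function $x_1|x|^{-N}$ being the standard counterexample producing $\partial_{x_1}\delta_0$) correctly identifies the role of the hypothesis $u>0$.
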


\begin{lemma}\label{lm 2.2}
 Assume that $p>0,$ $q>0$ and $u$ is a positive classical solution of (\ref{eq 1.1}) satisfying $u^p\in L^1_{loc}(\R^N)$ and $u^p(x)\le  c_0  |x| ^{-\bar \alpha}$ for   $|x|>1$,
 where $\bar \alpha\in(\alpha,N)$.
  Then $u\in L^1_{loc}(\R^N)$ and
\begin{equation}\label{lp}
 I_\alpha[u^p]u^q\in L^1_{loc}(\R^N)\cap L^\infty (\R^N\setminus B_1(0)).
\end{equation}

 \end{lemma}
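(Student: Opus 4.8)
\noindent The plan is to split according to the regions $\R^N\setminus B_1(0)$, where the bound $u^p(x)\le c_0|x|^{-\bar\alpha}$ is available, and $B_1(0)$, where one has only $u^p\in L^1_\loc$ together with the equation; $I_\alpha[u^p]$ and $u^q$ are treated separately.

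\emph{Estimate outside $B_1(0)$.} I would first show $I_\alpha[u^p](x)\le C\min\{1,|x|^{\alpha-\bar\alpha}\}$ for $|x|\ge1$. Splitting $I_\alpha[u^p]=\int_{B_1(0)}+\int_{\R^N\setminus B_1(0)}$, the $B_1(0)$-part is harmless: for $|x|\ge2$ one uses $|x-y|\ge\frac12|x|$ on $B_1(0)$, and for $1\le|x|\le2$ one separates $B_{1/2}(0)$ (where $|x-y|\ge\frac12$ and $u^p\in L^1$) from $B_1(0)\setminus B_{1/2}(0)$ (where $u^p$ is bounded by continuity and $y\mapsto|x-y|^{\alpha-N}$ is locally integrable). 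In the complementary part one inserts $u^p(y)\le c_0|y|^{-\bar\alpha}$ and splits $\{|y|>1\}$ into $\{1<|y|<\frac12|x|\}$, $\{\frac12|x|\le|y|\le2|x|\}$, $\{|y|>2|x|\}$; each piece is $O(|x|^{\alpha-\bar\alpha})$, the last one converging precisely because $\alpha-N-\bar\alpha<-N$, i.e.\ $\bar\alpha>\alpha$, and the first because $\bar\alpha<N$. Since also $u(x)\le c_0^{1/p}|x|^{-\bar\alpha/p}$, hence $u^q(x)\le c_0^{q/p}$, for $|x|\ge1$, this gives $I_\alpha[u^p]u^q\in L^\infty(\R^N\setminus B_1(0))$ (it actually decays at infinity).

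\emph{Estimate inside $B_1(0)$.} As $u\in C^2(\R^N\setminus\{0\})$ and, by the previous step and dominated convergence, $I_\alpha[u^p]$ is continuous on $\R^N\setminus\{0\}$, both functions are locally bounded away from $0$, and it remains to bound $\int_{B_{1/2}(0)}u$ and $\int_{B_{1/2}(0)}I_\alpha[u^p]u^q$. Here I would invoke the equation rather than estimate the potential by hand: since $u>0$ and $I_\alpha[u^p]u^q\ge0$, the function $u$ is a positive classical supersolution of $-\Delta w+w=0$ in $B_1(0)\setminus\{0\}$. By Harnack's inequality on dyadic annuli (after rescaling the zeroth order coefficient is $\le1$) one gets the dichotomy: either $u$ is bounded near $0$ — then $I_\alpha[u^p]$ is bounded near $0$, the product is locally bounded, and $u$ extends to a classical solution across $0$, so both integrals are finite — or $u(x)\to+\infty$ as $x\to0$, and then comparing $u$ on $B_\rho(0)$ with the solution of $-\Delta H+H=0$, $H=u$ on $\partial B_\rho(0)$, via the minimum principle for $-\Delta+\mathrm{Id}$, shows that the lower-semicontinuous extension of $u$ is $(-\Delta+\mathrm{Id})$-superharmonic on $B_1(0)$. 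By the Riesz decomposition of positive superharmonic functions (the potential-theoretic input behind \cite{BL}), it follows that $u\in L^1_\loc(B_1(0))$ and $-\Delta u+u=I_\alpha[u^p]u^q+k\delta_0$ in $\mathcal D'(B_1(0))$ for some $k\ge0$, with $I_\alpha[u^p]u^q\,dx+k\delta_0$ a \emph{locally finite} measure; in particular $\int_{B_{1/2}(0)}I_\alpha[u^p]u^q\,dx<\infty$. Together with the first step this yields $u\in L^1_\loc(\R^N)$ and $I_\alpha[u^p]u^q\in L^1_\loc(\R^N)\cap L^\infty(\R^N\setminus B_1(0))$, and in fact the hypotheses of Theorem \ref{teo 2.1} are then met.

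\emph{Main obstacle.} The hard point is the local integrability of $I_\alpha[u^p]u^q$ near $0$: one cannot estimate $\int_{B_1(0)}I_\alpha[u^p]u^q$ directly, because $I_\alpha[u^p\mathbf 1_{B_2(0)}]$ (the Riesz potential of an $L^1$ density) lies only in $L^r_\loc$ for $r<\frac{N}{N-\alpha}$, and its product with $u^q\in L^1_\loc$ need not be integrable; the equation must be exploited. Consequently the technical heart of the argument is the verification that the lower-semicontinuous extension of $u$ is $(-\Delta+\mathrm{Id})$-superharmonic across the origin, which rests on the Harnack dichotomy together with the maximum principle.
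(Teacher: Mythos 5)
Your proposal is correct in substance, and where it overlaps with the paper's written argument it follows the same route: the $L^\infty(\R^N\setminus B_1(0))$ bound is obtained in both cases by splitting $I_\alpha[u^p]$ into the contribution of $B_1(0)$ (controlled by $u^p\in L^1_{loc}$ and $|x-y|\ge |x|/2$ for $|x|$ large) and of $\R^N\setminus B_1(0)$ (controlled by the decay $u^p\le c_0|x|^{-\bar\alpha}$ with $\alpha<\bar\alpha<N$), combined with the boundedness of $u^q$ outside the unit ball. The genuine difference is in the $L^1_{loc}$ statement near the origin: the paper simply quotes \cite[Lemma 2.1]{CZ} for $u\in L^1_{loc}(\R^N)$ and $I_\alpha[u^p]u^q\in L^1_{loc}(\R^N)$, whereas you reprove it from scratch by observing that $u$ is a positive classical supersolution of $-\Delta w+w=0$ in $B_1(0)\setminus\{0\}$, extending it as a supersolution of $-\Delta+\mathrm{Id}$ across the polar set $\{0\}$, and invoking the Riesz decomposition to conclude that $-\Delta u+u$ is a locally finite nonnegative measure; since this measure coincides with $I_\alpha[u^p]u^q\,dx$ away from the origin, local finiteness yields the desired integrability. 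This is exactly the mechanism behind the Brezis--Lions lemma \cite{BL} on which \cite[Lemma 2.1]{CZ} rests, so your argument is the same in spirit but self-contained, and you correctly identify why a direct estimate of $\int_{B_1(0)}I_\alpha[u^p]u^q\,dx$ cannot work. Two small points of precision: the dichotomy ``bounded near $0$ or $u\to+\infty$'' is asserted via Harnack for a \emph{supersolution}, for which only the weak Harnack inequality holds; fortunately the dichotomy is not needed, since a nonnegative supersolution of $-\Delta+\mathrm{Id}$ on a punctured ball always admits a superharmonic extension across the origin regardless of its boundary behaviour there. Also, the claimed continuity of $I_\alpha[u^p]$ on $\R^N\setminus\{0\}$ needs the local boundedness of $u^p$ away from $0$ in addition to dominated convergence, which you do have from $u\in C^2(\R^N\setminus\{0\})$.
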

\begin{proof} From \cite[Lemma 2.1]{CZ}, we know that $ I_\alpha[u^p]u^q\in L^1_{loc}(\R^N)$.  For $|x|>2$ large enough, we have that
\begin{eqnarray*}
I_\alpha[u^p](x) &=& \int_{\R^N}\frac{u^p(y)}{|x-y|^{N-\alpha}}dy   \\
    &\le& \int_{\R^N\setminus B_1(0)}\frac{|y|^{-\bar\alpha}}{|x-y|^{N-\alpha}}dy+\int_{  B_1(0)}\frac{u^p(y)}{|x-y|^{N-\alpha}}dy
    \\&\le & |x|^{\alpha-\bar \alpha}\int_{\R^N\setminus B_{\frac1{|x|}}(0)} \frac{|z|^{-\bar\alpha}}{|e_x-z|^{N- \alpha}}dz + (|x|-1)^{\alpha-N}\int_{ B_1(0)}u^p(y)dy
    \\&\le &  |x|^{\alpha-\bar \alpha}\int_{\R^N } \frac{|z|^{-\bar\alpha}}{|e_x-z|^{N- \alpha}}dz +|x|^\alpha \norm{u}^p_{L^\infty(\R^N\setminus B_{|x|-r}(0))},
\end{eqnarray*}
where
\begin{eqnarray*}
\int_{\R^N } \frac{|z|^{-\bar\alpha}}{|e_x-z|^{N- \alpha}}dz  &\le &2^{N-\alpha} \int_{B_{1/2}(0) }  |z|^{-\bar\alpha}dz + 2^{\bar\alpha}\int_{B_{1/2}(e_1) } \frac1{|e_1-z|^{N- \alpha}}dz\\&&+\int_{\R^N}\frac1{1+|z|^{N-\alpha+\bar\alpha}}dz
\\&<&+\infty,
\end{eqnarray*}
then we have that
$$\lim_{|x|\to+\infty} I_\alpha[u^p](x)=0, $$
then it deduces that $I_\alpha[u^p]u^q\in L^\infty (\R^N\setminus B_1(0))$.
\end{proof}

\smallskip\noindent{\bf Proof of Theorem \ref{teo 1}. }  Let $u$ be a nonnegative classical solution of (\ref{eq 1.1}).  If  $u^p\in L^1(\R^N)$,   repeat the proof of \cite[Theorem 1.1]{CZ}, then we can obtain Theorem \ref{teo 1} directly. If $u^p\in L^1_{loc}(\R^N)$, we apply
 Lemma \ref{lm 2.2} to derive that $I_\alpha[u^p]u^q\in L^1_{loc}(\R^N)\cap L^\infty(\R^N\setminus B_1(0)),$ the by applying Theorem \ref{teo 2.1} with $f=I_\alpha[u^p]u^q$, we have that
there exists $k\ge 0$ such that
$$ \int_{\R^N}\left [u (-\Delta \xi)+u\xi-I_\alpha[u^p]u^q\xi\right]\, dx=k\xi(0),\quad \forall \xi\in C^\infty_c(\R^N).$$
Let $\mathbb{G}$   the Green's operator defined by   the Green kernel of $  -\Delta+Id $ in $\R^N\times\R^N$, then by Lemma \ref{lm 2.2},
we have that $\mathbb{G}[I_\alpha[u^p]u^q]$ is well-defined.  The remaining proof  is similar to  \cite[Theorem 1.1]{CZ} and we omit here.\hfill$\Box$

\subsection{Existence}

The key point for the proof of the
existence is to construct a suitable upper bound. To this end, we introduce some auxiliary functions.
Let $$\phi(r)=\left(r^{2-N}+r^{\tau}\right)e^{-\frac{r^2}{2N}}\ \ {\rm and}\ \ \varphi(r)=(r_0+|x|)^{\tau_0}, $$
%where $r_0=\max\left\{1,2\sqrt{\tau_0(\tau_0-1)}\right\}$ and
where $r_0=2\sqrt{\tau_0(\tau_0-1)}$ and $\tau\in \left(2-N, \min\{0, \; 2+\alpha-(N-2)(p+q)\}\right),$
where we have $2+\alpha-(N-2)(p+q)>2-N$ by assumption that $p+q <\frac{N+\alpha}{N-2}$.

By direct computation, we have that
\begin{eqnarray*}
  \phi''(r)+\frac{N-1}{r}\phi'(r)&=&\left[\tau(\tau+N-2) r^{\tau-2} -\frac{N+2\tau}{N}r^\tau+\frac{1}{N^2}r^{\tau+2} \right]e^{-\frac{r^2}{2N}}
  \\&&+\left[ \frac{N-4}{N}r^{2-N} +\frac{1}{N^2}r^{4-N}\right]e^{-\frac{r^2}{2N}}
\end{eqnarray*}
 and
\begin{eqnarray*}
  \varphi''(r)+\frac{N-1}{r} \varphi'(r)
   &=&\tau_0(\tau_0-1)(r_0+|x|)^{\tau_0-2}+\tau_0\frac{N-1}{|x|}(r_0+|x|)^{\tau_0-1} \\
   & \le &  \tau_0(\tau_0-1) (r_0+|x|)^{\tau_0-2}
   \\&\le&  \tau_0(\tau_0-1) \frac1{r_0^2}(r_0+|x|)^{\tau_0}
   \\&=&\frac14 \varphi(r),
\end{eqnarray*}
then there exists $a_0>0$ such that
\begin{equation}\label{3.1}
 [\phi''(r)+\frac{N-1}{r}\phi'(r)]+a_0[\varphi''(r)+\frac{N-1}{r} \varphi'(r)]\le \frac12 [\phi(r)+a_0\varphi(r)],\quad \forall r>0.
\end{equation}

For $k >0$, we define
\begin{equation}\label{wk}
 w_{k}(x)= k[\phi(|x|)+a_0\varphi(|x|)],\quad \forall x\in\R^N\setminus\{0\}.
\end{equation}

Now we prove the following
\begin{lemma}\label{lm 4.3}
Assume that $(p,\,q)$ satisfies (\ref{1.2}) and $w_k$ is defined in (\ref{wk}).
Then there exists $k_0$ such that for all $0<k\le k_0$,
$w_{k}$ is a super solution of
\begin{equation}\label{eq 4.1}
 -\Delta u+u=I_\alpha[u^p]u^q\quad{\rm in}\quad \R^N\setminus \{0\}.
\end{equation}
\end{lemma}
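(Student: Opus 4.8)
The plan is to verify the supersolution inequality $-\Delta w_k+w_k\ge I_\alpha[w_k^p]\,w_k^q$ pointwise on $\R^N\setminus\{0\}$ once $k$ is small. Abbreviate $\Phi(x):=\phi(|x|)+a_0\varphi(|x|)$, so that $w_k=k\Phi$; by linearity of the Riesz potential $I_\alpha[w_k^p]w_k^q=k^{p+q}I_\alpha[\Phi^p]\Phi^q$, and since $\Phi$ is radial, $-\Delta w_k+w_k=k\,L\Phi$, where $L\Phi:=\Phi-(\Phi''+\tfrac{N-1}{r}\Phi')$ is viewed as a function of $r=|x|$. Dividing by $k$, the claim reduces to producing a constant $c>0$ with
\[
 L\Phi(|x|)\ \ge\ c\,I_\alpha[\Phi^p](x)\,\Phi(x)^q\qquad\text{for all }x\in\R^N\setminus\{0\};
\]
indeed, condition (\ref{1.2}) forces $p+q>1$ — if $(1-\tfrac\alpha N)p+q\ge1$ then $p+q\ge1+\tfrac\alpha N p>1$, while $p+q\ge1+\tfrac\alpha{N-2}>1$ is immediate — so $k^{p+q-1}\le c$ whenever $0<k\le k_0:=\min\{1,\,c^{1/(p+q-1)}\}$, which is exactly what is needed.

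Before the pointwise comparison I would record the size of $I_\alpha[\Phi^p]$. Near the origin $\Phi(y)\asymp|y|^{2-N}$, so $\Phi^p$ is locally integrable there precisely because $p<\tfrac{N}{N-2}$ (this is where that hypothesis enters), and a scaling argument gives $I_\alpha[\Phi^p](x)\lesssim\max\{1,|x|^{\alpha-(N-2)p}\}$ for $|x|\le1$. At infinity $\Phi(y)\asymp a_0|y|^{\tau_0}$ with $\tau_0 p+\alpha<0$; the latter is a consequence of (\ref{1.2}), since each of its two alternatives forces $\max\{N-2,\tfrac{N-\alpha}{1-q}\}>\tfrac\alpha p$, i.e. $-\tau_0 p>\alpha$. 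Hence the tail of the potential converges, and splitting the integral over $\{|y|\le1\}$, $\{1\le|y|\le|x|/2\}$, $\{|x|/2\le|y|\le2|x|\}$, $\{|y|\ge2|x|\}$ yields $I_\alpha[\Phi^p](x)\lesssim|x|^{\alpha+\max\{-N,\tau_0 p\}}$ for $|x|\ge1$.

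I would then carry out the comparison in three regimes. For $|x|\le\rho$: the explicit expression for $\phi''+\tfrac{N-1}{r}\phi'$ shows that $L\phi$ has leading positive terms $\tfrac4N r^{2-N}$ and $|\tau(\tau+N-2)|\,r^{\tau-2}$, the remaining terms being of lower order, so $L\Phi\ge L\phi\gtrsim|x|^{2-N}+|x|^{\tau-2}$; against this the previous step gives $I_\alpha[\Phi^p]\Phi^q\lesssim|x|^{(2-N)q}+|x|^{\alpha-(N-2)(p+q)}$. The inequality $|x|^{2-N}\gtrsim|x|^{(2-N)q}$ for $|x|\le1$ holds because $q<1$, and $|x|^{\tau-2}\gtrsim|x|^{\alpha-(N-2)(p+q)}$ holds because $\tau<2+\alpha-(N-2)(p+q)$ — which is exactly the upper endpoint of the admissible window for $\tau$, and is the reason $\phi$ was given the extra summand $r^\tau$; after shrinking $\rho$ the comparison holds on $B_\rho(0)\setminus\{0\}$ for every $k\le1$. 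On a bounded annulus $\rho\le|x|\le R$ all quantities are continuous and strictly positive (with $L\Phi\ge\tfrac12\Phi$ from (\ref{3.1})), so the ratio $L\Phi/(I_\alpha[\Phi^p]\Phi^q)$ has a positive lower bound there and the comparison holds for $k$ below a threshold. Finally for $|x|\ge R$: $L\Phi\ge\tfrac12\Phi\ge\tfrac{a_0}2\varphi\gtrsim|x|^{\tau_0}$, while $I_\alpha[\Phi^p]\Phi^q\lesssim|x|^{\alpha+\max\{-N,\tau_0 p\}+\tau_0 q}$; that this exponent is $\le\tau_0$ reduces, after elementary algebra, to $\max\{N-2,\tfrac{N-\alpha}{1-q}\}(p+q-1)\ge\alpha$, and verifying this is precisely where both alternatives of (\ref{1.2}) get used — whichever of $N-2$, $\tfrac{N-\alpha}{1-q}$ attains the maximum, the matching alternative of (\ref{1.2}) delivers the bound. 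Thus the comparison holds on $\{|x|\ge R\}$ for $k$ small, and taking $k_0$ to be the minimum of the three thresholds would finish the proof.

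The hard part will be the exponent bookkeeping: one must pin down the blow-up of $I_\alpha[\Phi^p]$ at $0$ and its decay at $\infty$, and then recognise that the two structural choices in the ansatz — the window $\tau\in(2-N,\min\{0,2+\alpha-(N-2)(p+q)\})$ for the origin, and $\tau_0=-\max\{N-2,\tfrac{N-\alpha}{1-q}\}$ together with (\ref{1.2}) for infinity — are tuned so that $L\Phi$ dominates $I_\alpha[\Phi^p]\Phi^q$ at both ends. (On the threshold locus where (\ref{1.2}) holds with equality the tail estimate may acquire a logarithmic factor; this is absorbed by a slightly sharper estimate, or by the strict inequalities available in the setting of Theorem~\ref{teo 2}.)
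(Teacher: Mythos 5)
Your proposal is correct and follows essentially the same route as the paper's proof: the same ansatz $w_k=k(\phi+a_0\varphi)$, the same upper bounds for $I_\alpha[\Phi^p]$ near the origin (of order $|x|^{\alpha-(N-2)p}$) and at infinity (of order $|x|^{\alpha+\max\{-N,\tau_0p\}}$), and the same reduction of the two endpoint comparisons to $\tau\le 2+\alpha-(N-2)(p+q)$ and $\tau_0\le-\alpha/(p+q-1)$, the latter being exactly condition (\ref{1.2}); the only cosmetic differences are that you factor out $k$ up front (explicitly noting $p+q>1$) and treat the intermediate annulus by compactness rather than by the paper's global two-region estimates. Both arguments dispatch the $\alpha-N$ branch of the max at infinity somewhat tersely, but it is harmless since $p<\tfrac{N}{N-2}$ (needed anyway for $I_\alpha[\Phi^p]$ to be finite) forces $\tau_0=-\tfrac{N-\alpha}{1-q}$, for which $\alpha-N+\tau_0q=\tau_0$.
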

 \begin{proof}
 From (\ref{3.1}),  we have that
\begin{equation}\label{4.4}
 -\Delta w_k(x)+w_k(x)\ge\frac12w_k(x)+c(\tau)k r^{\tau-2}e^{-\frac{r^2}{2N}},
\end{equation}
where $c(\tau)=-\tau(N-2+\tau)>0$.

 We observe that
\begin{eqnarray*}
 I_\alpha[ \varphi^p](x)= \int_{\R^N}\frac{1}{|x-y|^{N-\alpha}}(r_0+|y|)^{\tau_0p}dy:= \int_{\R^N}I(x,y)dy
\end{eqnarray*}
and   for $|x|> 2r_0$, we have that
\begin{eqnarray*}
\int_{B_{1}(0)}I(x,y)dy&\le& 2^{N-\alpha}|x|  ^{\alpha-N}\int_{B_{1}(0)}(r_0+|y|)^{\tau_0 p}dy\le c_5|x|^{\alpha-N},
\end{eqnarray*}
\begin{eqnarray*}
\int_{B_{\frac{|x|}2}(0)\setminus B_1(0)}I(x,y)dy&\le&2^{N-\alpha}|x|  ^{\alpha-N} \int_{B_{\frac{|x|}2}(0)\setminus B_1(0)}|y|^{\tau_0p}dy\le c_5|x|^{\alpha+\tau_0p},
\end{eqnarray*}
\begin{eqnarray*}
\int_{B_{\frac{|x|}2}(x)}I(x,y)dy&\le& 2^{-\tau_0p} |x|^{\tau_0p}\int_{B_{\frac{|x|}2}(x)}\frac1{|x-y|^{N-\alpha}}dy\le c_5|x|^{\alpha+\tau_0p}
\end{eqnarray*}
and
\begin{eqnarray*}
\int_{\R^N\setminus (B_{\frac{|x|}2}(x)\cup B_{\frac{|x|}2}(0))}I(x,y)dy&\le& c_6\int_{\R^N\setminus B_{\frac{|x|}2}(0)}|y|^{\alpha-N+\tau_0p}dy\le c_6|x|^{\alpha+\tau_0p}.
\end{eqnarray*}
Thus, there exists $c_7>0$ such that
$$
I_\alpha[ \varphi^p](x)\le c_7(r_0+|x|)^{\max\left\{\alpha-N,\,\alpha+\tau_0p\right\}},\quad |x|>0.
$$

Similarly, we have that for $|x|>1$
$$
I_\alpha[ \phi^p](x)\le c_7 |x|^{ \alpha-N }.
$$
We observe that there exists $c_8>0$ such that for $0<|x|<1$,
\begin{eqnarray*}
I_\alpha[ \phi^p](x)&\le&  c_8\int_{\R^N\setminus B_2(0)}|y|^{\alpha-N+\tau}e^{-\frac{|y|^2}{2N}}dy+c_8\int_{ B_2(0)}\frac{|y|^{\tau}}{|x-y|^{N-\alpha}}dy
\\&\le& c_9+c_9|x|^{\alpha+\tau} \int_{ B_{\frac2{|x|}}(0)}\frac{|z|^{\tau}}{|e_x-z|^{N-\alpha}}dz,
\end{eqnarray*}
where $e_x=\frac{x}{|x|}$ and
\begin{eqnarray*}
\int_{ B_{\frac2{|x|}}(0)}\frac{|z|^{\tau}}{|e_x-z|^{N-\alpha}}dz &\le & c_{10}\int_{B_{\frac12}(e_x)}\frac1{|e_x-z|^{N-\alpha}}dz +c_{10}\int_{B_{\frac12}(0)} |z|^\tau dz \\&&+c_{10}\int_{B_{\frac2{|x|}}(0)}(1+|z|)^{\tau+\alpha-N}dz
 \\  &\le &  c_{11}+c_{11}|x|^{-\tau-\alpha}.
\end{eqnarray*}
Therefore,  there exists $c_{12}>0$ such that
\begin{eqnarray*}
I_\alpha[\phi^p](x) \le  c_{12}+c_{12}|x|^{\alpha+\tau p}\quad{\rm for}\ \ 0<|x|<1.
\end{eqnarray*}
By the fact that
$$(a+b)^t\le 2^{t+1} (a^t+b^t)\quad {\rm for}\quad t,\,a,\, {\rm and } \,b>0,$$
 we have that for $|x|>1$,
$$
I_\alpha[w_k^p]w_k^q(x)\le c_8k^{p+q} (r_0+|x|)^{\max\left\{\alpha-N,\,\alpha+\tau_0p\right\} +\tau_0q},
$$
and for $0<|x|<1$,
$$
I_\alpha[w_k^p]w_k^q(x)\le c_9k^{p+q}[|x|^{\alpha-(N-2)(p+q)} +1].
$$
Tighter with (\ref{4.4}), to obtain the inequality
\begin{equation}\label{d 2}
 I_\alpha[w_k^p]w_k^q(x)\le \frac12 w_k++c(\tau)k r^{\tau-2}e^{-\frac{r^2}{2N}},
\end{equation}
it requires that
$$\max\left\{\alpha-N,\,\alpha+\tau_0p\right\} +\tau_0q \le \tau_0\quad{\rm and}\quad \alpha-(N-2)(p+q)\ge \tau-2,$$
which is equivalent to
\begin{equation}\label{d 1}
  \tau_0\le -\frac{\alpha}{p+q-1} \quad{\rm and}\quad \tau\le 2+\alpha-(N-2)(p+q).
\end{equation}
The first inequality of (\ref{d 1}) reads as
$$ \max\{N-2,\, \frac{N-\alpha}{1-q}\}\ge \frac{\alpha}{p+q-1},$$
which  is   (\ref{1.2}).
The first inequality of (\ref{d 1}) holds by the choosing of $\tau$.
Now (\ref{d 2}) holds if
$$c_9k^{p+q}\le (\frac12+c(\tau))k,$$
 thus, there exists $k_0>0$ such for $k\in(0,k_0)$ the following inequality holds
 \begin{eqnarray*}
 -\Delta w_k(x)+w_k(x) &\ge& \frac12 w_k+c(\tau) |x|^{\tau-2}e^{-\frac{|x|^2}{2N}} \\
     &\ge & I_\alpha[w_k^p]w_k^q(x),\quad \forall\, |x|>0.
 \end{eqnarray*}
This completes the proof.
\end{proof}

%\begin{remark}\label{re 3.1}

%\end{remark}

\medskip

\noindent {\bf Proof of Theorem \ref{teo 2} for existence part.}  For any $k \in (0, k_0)$ where $k_0$ is given by Lemma \ref{lm 4.3}, we define the
iterating sequence $\{v_n\}_{n \geq 0}$
by $$v_0:= k \Gamma_0>0$$
and
$$
 v_n  =  \mathbb{G}[I_\alpha[v_{n-1}^p] v_{n-1}^q]+ k \Gamma_0.
$$
Observing that
$$v_1= \mathbb{G} [I_\alpha[v_0^p] v_0^q ] + k \Gamma_0>v_0$$
and  assuming that
$$
v_{n-1} \ge  v_{n-2} \quad{\rm in} \quad \R^N\setminus\{0\},
$$
we deduce that
\begin{eqnarray*}
 v_n =   \mathbb{G}[I_\alpha[v_{n-1}^p] v_{n-1}^q]+ k \Gamma_0
 \ge  \mathbb{G}[I_\alpha[v_{n-2}^p] v_{n-2}^q]+ k \Gamma_0
 =   v_{n-1}.
\end{eqnarray*}
Thus, the sequence $\{v_n\}_n$ is a increasing with respect to $n$.
Moveover, we have that
\begin{equation}\label{4.2.3}
\int_{\R^N} v_n[(-\Delta \xi) +\xi] \,dx =\int_{\R^N} I_\alpha[v_{n-1}^p] v_{n-1}^q\xi \,dx +k\xi(0), \quad \forall \xi\in C^\infty_c(\R^N).
\end{equation}

From Lemma \ref{lm 4.3},
%there exists an upper bound for $0<k<k_0$.  In fact,
 and the definition of $w_k$, we have  $w_k>v_0$ and
$$v_1= \mathbb{G} [I_\alpha[v_0^p]v_0^q]+ k \Gamma_0\le  \mathbb{G} [I_\alpha[w_k^p]w_k^q]+ k w_k=w_k.$$
Inductively, we obtain
\begin{equation}\label{2.10a}
v_n\le w_k
\end{equation}
for all $n\in\N$. Therefore, the sequence $\{v_n\}_n$ converges. Let $u_{k}:=\lim_{n\to\infty} v_n$. By \eqref{4.2.3}, $u_{k}$ is a weak solution of (\ref{eq 1.2})
and satisfies (\ref{1.2}).

For $k\le k_q$, we have that $u_k\le w_{t_q}$ in $\R^N\setminus\{0\}$, so  $u_k\in L^p(\R^N)$ and $I_\alpha[u_k^p]u_k^q$ is bounded uniformly locally in $\R^N\setminus\{0\}$,
then $u_k$ is a classical solution of (\ref{eq 1.1}).

We claim that $u_{k}$ is the minimal solution of (\ref{eq 1.1}), that is, for any positive solution $u$ of (\ref{eq 1.2}), we always have $u_{k}\leq u$. Indeed,  there holds
\[
 u  = \mathbb{G}[ I_\alpha[ u^p]u^q]+ k \Gamma_0\ge v_0,
\]
and then
\[
 u  = \mathbb{G}[ I_\alpha[ u^p]u^q]+ k \Gamma_0\ge  \mathbb{G}[ I_\alpha[ v_0^p]v_0^q]+ k \Gamma_0=v_{1}.
\]
We may show inductively that
\[
u\ge v_n
\]
for all $n\in\N$.  The claim follows.
 \hfill$\Box$

 \subsection{Decay at infinity}

It remains to prove
%In this subsection, we are concerned with
the decay at infinity of $u_k$ satisfies (\ref{1.8}) or (\ref{1.10}) under the conditions (\ref{1.6}) and (\ref{1.9}) respectively.
%It is known that $I_\alpha[u]$ is the convolution by the Riesz potential $I_\alpha(x)=|x|^{\alpha-N}$.
From  Lemma \ref{lm 4.3} and Proposition \ref{pr e 1}, we have that   the minimal solution $u_k$ of (\ref{eq 1.2}) has the decay
 \begin{equation}\label{d 3}
  \limsup_{|x|\to+\infty}u_k(x)|x|^{-\tau_0}\le k\quad{\rm and}\quad  \liminf_{|x|\to+\infty}u_k(x)|x|^{-\tau_0}>0.
 \end{equation}
So  when $(p,\,q)$ satisfies
 \begin{equation}\label{1.2<}
 (1-\frac\alpha N)p+q>1 \quad{\rm and }\quad p+q <  \frac{N+\alpha}{N-2},
 \end{equation}
then the minimal solution   $u_k^p(x)\le c|x|^{\tau_0p}$ with $-\tau_0p>N$ and $u^p\in L^1(\R^N)$.  In this case, we employee  the idea in \cite{MZ} to
refine the decay estimate in the case (\ref{1.6}).

When
 \begin{equation}\label{1.2>}
 (1-\frac\alpha N)p+q\le  1 \quad{\rm and }\quad 1+\frac{\alpha}{N-2} \le p+q <  \frac{N+\alpha}{N-2},
 \end{equation}
we see that  $u_k^p$ is no longer in $L^1(\R^N)$ and
$$u_k^p(x)\le 3k |x|^{-(N-2)p},\qquad |x|>1, $$
where we note that $(N-2)p\in (\alpha,N)$.

We first prove the following

\begin{proposition}\label{pr 4.1}
Assume that $\alpha\in(0,\, N)$, $\beta>N$ and the nonnegative function $f\in L^1(\R^N)$ satisfies
\begin{equation}\label{6.1}
f_\infty:=\sup_{|x|>r_0} f(x)|x|^\beta<+\infty.
\end{equation}
 Let $I_\alpha(x) = \frac{1}{|x|^{N-\alpha}}$, then for $|x|>r_0$ large,
 \begin{equation}\label{6.2}
\left|I_\alpha [ f](x)-\norm{f}_{L^1(\R^N)}I_\alpha(x)\right|\le  \frac{c_{10}}{|x|^{N-\alpha+\gamma}},
 \end{equation}
 holds for $\gamma=\frac{\beta-N}{1+\beta-N}$ and
 $c_{10} >0$ depending on $\alpha,\,\beta,\, N$  and $f_\infty$.

\end{proposition}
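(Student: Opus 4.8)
The plan is to split the Riesz potential $I_\alpha[f](x) = \int_{\R^N} |x-y|^{\alpha-N} f(y)\,dy$ into a near-field and far-field piece according to the position of $y$ relative to the origin, using a cutoff radius $\rho = \rho(|x|)$ that will be optimized at the end. Write $I_\alpha[f](x) = \int_{|y|\le \rho} |x-y|^{\alpha-N} f(y)\,dy + \int_{|y| > \rho} |x-y|^{\alpha-N} f(y)\,dy =: A(x) + B(x)$, and correspondingly $\norm{f}_{L^1}I_\alpha(x) = |x|^{\alpha-N}\int_{|y|\le\rho} f + |x|^{\alpha-N}\int_{|y|>\rho} f =: A_0(x) + B_0(x)$. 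The term to be estimated is then bounded by $|A(x)-A_0(x)| + |B(x)| + |B_0(x)|$, and I would treat each separately.

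For the near-field difference $|A(x) - A_0(x)| = \bigl|\int_{|y|\le\rho}(|x-y|^{\alpha-N} - |x|^{\alpha-N})f(y)\,dy\bigr|$: assuming $\rho \le |x|/2$ so that $|x-y|$ stays comparable to $|x|$, the mean value theorem gives $\bigl||x-y|^{\alpha-N} - |x|^{\alpha-N}\bigr| \le C|y|\,|x|^{\alpha-N-1}$ on this range. Hence $|A(x)-A_0(x)| \le C|x|^{\alpha-N-1}\int_{\R^N}|y|\,f(y)\,dy$; the last integral is finite because $f\in L^1$ near $0$ and $f(y)\le f_\infty|y|^{-\beta}$ with $\beta > N$ makes $|y|f(y)$ integrable at infinity. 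So this piece is $O(|x|^{\alpha-N-1})$, which is better than the claimed rate (independent of $\rho$). For the far-field tails, use $f(y)\le f_\infty |y|^{-\beta}$ on $|y|>\rho$ (valid once $\rho \ge r_0$): then $|B_0(x)| \le |x|^{\alpha-N} f_\infty \int_{|y|>\rho}|y|^{-\beta}\,dy \le C f_\infty |x|^{\alpha-N}\rho^{N-\beta}$. For $|B(x)|$ I would further split $\{|y|>\rho\}$ into the part near $x$ (say $|x-y| < |x|/2$, where $|y|$ is comparable to $|x| \ge 2\rho$ so $f(y)\le C f_\infty |x|^{-\beta}$, and the local singularity $\int_{|x-y|<|x|/2}|x-y|^{\alpha-N}\,dy \le C|x|^\alpha$ is integrable) and the part away from $x$ (where $|x-y|^{\alpha-N}\le C|x|^{\alpha-N}$ or one absorbs it into the $|y|$-integral), yielding $|B(x)| \le C f_\infty\bigl(|x|^{\alpha-\beta} + |x|^{\alpha-N}\rho^{N-\beta}\bigr)$. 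Collecting, $|I_\alpha[f](x) - \norm{f}_{L^1}I_\alpha(x)| \le C\bigl(|x|^{\alpha-N-1} + |x|^{\alpha-\beta} + |x|^{\alpha-N}\rho^{N-\beta}\bigr)$.

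Finally I would choose $\rho$ to balance the last term against the others: taking $\rho = |x|^{\theta}$ with $0<\theta<1$, the last term is $|x|^{\alpha-N+\theta(N-\beta)} = |x|^{\alpha-N-\theta(\beta-N)}$. Setting $\gamma := \theta(\beta-N)$, one wants also $\gamma \le 1$ (to be dominated by the $|x|^{\alpha-N-1}$ term... actually one just keeps the worst of the exponents) and $\gamma \le \beta - N$; optimizing the trade-off between $\rho\le|x|/2$ (forces $\theta<1$) and making $\theta(\beta-N)$ as large as possible while the near-field error $|x|^{\alpha-N-1}$ does not dominate leads to the stated choice $\gamma = \frac{\beta-N}{1+\beta-N}$, i.e. $\theta = \frac{1}{1+\beta-N}$, which makes $\theta(\beta-N) = \gamma$ and simultaneously $1-\theta = \frac{\beta-N}{1+\beta-N} = \gamma$ so that all three exponents $-1$, $-(\beta-N)$, $-\gamma$ (relative to $|x|^{\alpha-N}$) are $\ge \gamma$ in absolute value — hence $|x|^{-(N-\alpha+\gamma)}$ dominates. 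The constant $c_{10}$ then depends only on $\alpha,\beta,N$ and $f_\infty$ (through $\norm{f}_{L^1}$ one also uses finiteness, but the tail bounds only see $f_\infty$). The main obstacle I expect is the careful bookkeeping in bounding $B(x)$: the region $\{|y|>\rho\}$ contains the singularity of the kernel at $y=x$, so the sub-splitting near $x$ must be done so that the local contribution $|x|^\alpha \cdot |x|^{-\beta}$ comes out with the right power, and one must check that none of the several error terms is worse than $|x|^{\alpha-N-\gamma}$; getting the exponent $\gamma = \frac{\beta-N}{1+\beta-N}$ to emerge cleanly is the one genuinely delicate point.
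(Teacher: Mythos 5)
Your decomposition is essentially the paper's: you cut at the radius $\rho=|x|^{\theta}$ with $\theta=\frac{1}{1+\beta-N}=1-\gamma$, which is exactly the paper's $R=|x|^{1-\gamma}$, and your treatment of the two tail terms $B$ and $B_0$ (splitting the region $|y|>\rho$ near and away from $x$) reproduces the paper's estimate (\ref{6.5}) together with the tail-mass bound. The one step that fails as written is the near-field error: you bound $|A(x)-A_0(x)|\le C|x|^{\alpha-N-1}\int_{\R^N}|y|f(y)\,dy$ and assert that this first moment is finite because $\beta>N$. That is not true in general: from (\ref{6.1}) one only gets $|y|f(y)\le f_\infty|y|^{1-\beta}$ for $|y|>r_0$, which is integrable at infinity only when $\beta>N+1$, while the hypothesis merely gives $\beta>N$. (This is not an idle case distinction: in the application to Theorem \ref{teo 2} one takes $\beta=\frac{N-\alpha}{1-q}p$, which may well lie in $(N,N+1]$.) Consequently your claim that the near-field piece is $O(|x|^{\alpha-N-1})$ ``independent of $\rho$'' does not hold in general.

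The repair is immediate and lands you exactly on the paper's estimate: the integral in question is taken only over $|y|\le\rho$, so bound $|y|\le\rho$ inside it to obtain
$$|A(x)-A_0(x)|\le C\rho\,|x|^{\alpha-N-1}\norm{f}_{L^1(\R^N)}=C\norm{f}_{L^1(\R^N)}\,|x|^{-(N-\alpha+\gamma)},$$
using $1-\theta=\gamma$. (Alternatively, keep $|y|$ and note that $\int_{|y|\le\rho}|y|f(y)\,dy\le C\bigl(1+\rho^{N+1-\beta}\bigr)$ when $\beta<N+1$; this also closes the estimate, but the crude bound suffices and is sharper bookkeeping than needed.) This corrected near-field bound is precisely what the paper obtains via the sandwich $(|x|+R)^{\alpha-N}\le|x-y|^{\alpha-N}\le(|x|-R)^{\alpha-N}$ on $B_R(0)$, whose relative error against $|x|^{\alpha-N}$ is $O(R/|x|)=O(|x|^{-\gamma})$ multiplied by the full mass $\norm{f}_{L^1(\R^N)}$, plus the missing-mass term $\bigl|\int_{B_R(0)}f-\norm{f}_{L^1(\R^N)}\bigr|\le Cf_\infty R^{N-\beta}$, which you already have as $B_0$. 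With that one correction your argument is complete and coincides with the paper's.
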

\begin{proof}
For $|x|>r_0$ large enough and $R=|x|^{1- \gamma}$, we have that
\begin{equation}\label{6.3}
\frac{1}{(|x|+R)^{N-\alpha}} \int_{B_R(0)}f(y) dy \le\int_{B_R(0)}\frac{f(y)}{|x-y|^{N-\alpha}}dy\le \frac{1}{(|x|-R)^{N-\alpha}}\int_{B_R(0)}f(y) dy.
\end{equation}
We observe that
$$\frac{1}{(|x|-R)^{N-\alpha}}\le \frac{1}{|x| ^{N-\alpha}}\left(1+c_{11}\frac{R}{|x|}\right),$$
$$\frac{1}{(|x|+R)^{N-\alpha}}\ge \frac{1}{|x| ^{N-\alpha}}\left(1-c_{11}\frac{R}{|x|}\right)$$
and
$$
\int_{\R^N\setminus B_R(0)}f(y) dy  \le   f_\infty  \int_{\R^N\setminus B_R(0)} |y|^{-\beta} dy =  c_{12}f_\infty R^{N-\beta},
$$
where $c_{11},\, c_{12}>0$ are independent of $R$.
%We observe that $\gamma\in(0,1)$ and $\lim_{|x|\to+\infty}\frac{R}{|x|}=0$,
That means
$$\left| \int_{ B_R(0)}f(y) dy-\norm{f}_{L^1(\R^N)}\right| \le c_{12}f_\infty R^{N-\beta}. $$
Therefore, from (\ref{6.3}) we have that
\begin{equation}\label{6.4}
\left| \int_{B_R(0)}\frac{f(y)}{|x-y|^{N-\alpha}}dy- \norm{f}_{L^1(\R^N)}I_\alpha(x) \right|\le  \frac{c_{13}}{|x|^{N-\alpha+\gamma}}.
\end{equation}
On the other hand,
\begin{eqnarray*}
 \int_{\R^N\setminus B_R(0)}\frac{f(y)}{|x-y|^{N-\alpha}} dy   &\leq & f_\infty  \int_{\R^N\setminus B_R(0)}\frac{1}{|x-y|^{N-\alpha}|y|^{\beta}} dy    \\
    &=&  f_\infty \frac{1}{|x|^{\beta-\alpha}} \int_{\R^N\setminus B_{\frac{R}{|x|}}(0)}\frac{1}{|e_x-z|^{N-\alpha}|z|^{
    \beta}} dz,
\end{eqnarray*}
where $e_x=\frac{x}{|x|}$ and
\begin{eqnarray*}
&&\int_{\R^N\setminus B_{\frac{R}{|x|}}(0)}\frac{1}{|e_x-z|^{N-\alpha}|z|^{\beta}} dz
\\& &\le
  2^{ \beta}\int_{B_\frac12(e_x) }\frac{1}{ |e_x-z|^{N-\alpha}} dz+  2^{N-\alpha}\int_{\R^N\setminus B_{\frac{R}{|x|}}(0) }\frac{1}{ |z|^{\beta}} dz
  %\\   &&+2^{N-\alpha} \int_{\R^N\setminus B_\frac12(0) }\frac{1}{ |z|^{ \beta}} dz
   \\& &\le c_{14} \left(\frac{R}{|x|}\right)^{N-\beta},
\end{eqnarray*}
where we have used the fact that the first term $\int_{B_\frac12(e_x) }\frac{1}{ |e_x-z|^{N-\alpha}} dz$ is bounded. Thus,
 \begin{equation}\label{6.5}
 \int_{\R^N\setminus B_R(0)}\frac{f(y)}{|x-y|^{N-\alpha}} dy    \le c_{14} f_\infty \frac{1}{|x|^{\beta-\alpha}} \left(\frac{R}{|x|}\right)^{N-\beta} = \frac{c_{14}f_\infty}{|x|^{N-\alpha+\gamma}},
 \end{equation}
 since $\beta -\alpha + \gamma (N- \beta) = N -\alpha +\gamma$. Therefore
%By the facts that $\beta>N$ and  $$I_\alpha[f](x)= \int_{ B_R(0)}\frac{f(y)}{|x-y|^{N-\alpha}} dy+ \int_{\R^N\setminus %B_R(0)}\frac{f(y)}{|x-y|^{N-\alpha}} dy, $$
the estimates (\ref{6.4}) and (\ref{6.5}) imply (\ref{6.2}).
\end{proof}

Before to complete the proof of Theorem \ref{teo 2}, we need also the following
\begin{lemma}\label{lm 4.4}\cite[Lemma 6.7]{MV}
Assume that $u$  is a solution of the equation
$$
 \arraycolsep=1pt
\begin{array}{lll}
 \displaystyle \ \   -\Delta   u+  \mu u =\frac{\nu}{|x|^\sigma}\quad
  {\rm in}\quad  \mathbb{R}^N\setminus B_r(0),\\[2mm]
 \phantom{ }
   \displaystyle    \lim_{|x|\to+\infty}u(x)=0,
\end{array}
 $$
where parameters  $\mu,\,\nu, \,\sigma,\, r$ are positive.
Then we have $$\lim_{|x|\to+\infty}u(x)|x|^{\sigma}=\frac{\nu}{\mu}. $$
\end{lemma}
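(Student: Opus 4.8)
The plan is to prove the sharp two-sided asymptotics $\lim_{|x|\to\infty}u(x)|x|^{\sigma}=\nu/\mu$ by a comparison argument on the exterior domain, exploiting the heuristic that at infinity $-\Delta$ is of lower order, so $\mu u$ must match $\nu|x|^{-\sigma}$ to leading order. Fix an auxiliary exponent $\delta\in(0,2)$ and consider the candidate barriers
$$ w_\pm(x)=\frac{\nu}{\mu}|x|^{-\sigma}\pm A|x|^{-\sigma-\delta},\qquad |x|>R_0, $$
with $A>0$ and $R_0\ge r$ to be fixed. From the elementary identity $-\Delta(|x|^{-a})=a(N-2-a)|x|^{-a-2}$ one gets
\begin{multline*}
-\Delta w_\pm+\mu w_\pm=\nu|x|^{-\sigma}\pm A\mu|x|^{-\sigma-\delta}\\
+\frac{\nu}{\mu}\sigma(N-2-\sigma)|x|^{-\sigma-2}\pm A(\sigma+\delta)(N-2-\sigma-\delta)|x|^{-\sigma-\delta-2},
\end{multline*}
so $w_+$ (resp. $w_-$) will be a super- (resp. sub-) solution of $-\Delta v+\mu v=\nu|x|^{-\sigma}$ on $\R^N\setminus\overline{B_{R_0}(0)}$ as soon as the term $\pm A\mu|x|^{-\sigma-\delta}$ controls the last two correction terms.

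The point is that the threshold past which $A\mu|x|^{-\sigma-\delta}$ dominates the term of order $|x|^{-\sigma-\delta-2}$ depends only on $\mu,\sigma,\delta,N$, since the factor $A$ cancels. Thus I would first fix $R_0\ge\max\{r,1\}$ large, depending only on $\mu,\sigma,\delta,N$, so that $(\sigma+\delta)|N-2-\sigma-\delta|\le\frac{\mu}{2}R_0^{2}$; then choose $A$ large enough that both $A\ge\frac{2\nu\sigma|N-2-\sigma|}{\mu^{2}}R_0^{-(2-\delta)}$ (so the $|x|^{-\sigma-2}$ correction is absorbed on $|x|\ge R_0$) and $AR_0^{-\sigma-\delta}\ge\frac{\nu}{\mu}R_0^{-\sigma}+\max_{\partial B_{R_0}(0)}|u|$, the maximum being finite since $u$ is a classical solution, hence continuous on the sphere. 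With these choices $w_-\le u\le w_+$ on $\partial B_{R_0}(0)$, while $w_\pm(x)\to 0=\lim_{|x|\to\infty}u(x)$. The comparison principle for $-\Delta+\mu$ on the exterior domain then gives $w_-\le u\le w_+$ throughout $\R^N\setminus B_{R_0}(0)$: for instance, if $v:=w_+-u$ had $\inf v<0$, then, $v$ being continuous and tending to $0$ at infinity, this infimum would be attained, either on $\partial B_{R_0}(0)$, where $v\ge0$, or at an interior point $x_0$, where $-\Delta v(x_0)\le0$ and $\mu v(x_0)<0$ contradict $-\Delta v+\mu v\ge0$; the bound $u\ge w_-$ is symmetric.

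Multiplying $w_-(x)\le u(x)\le w_+(x)$ by $|x|^{\sigma}$ gives $\frac{\nu}{\mu}-A|x|^{-\delta}\le u(x)|x|^{\sigma}\le\frac{\nu}{\mu}+A|x|^{-\delta}$, and letting $|x|\to\infty$ yields the assertion. I expect the only delicate point to be the bookkeeping of constants, namely checking that $R_0$ can indeed be chosen from $\mu,\sigma,\delta,N$ alone \emph{before} $A$ is selected; this is precisely what the restriction $\delta\in(0,2)$ buys, and it is what makes the two requirements on $A$ (the differential inequality at order $|x|^{-\sigma-2}$ and the ordering on $\partial B_{R_0}(0)$) simultaneously satisfiable. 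An alternative proof, in the spirit of \cite{MVS}, would write $u$ as the exponentially decaying solution of the homogeneous equation plus the convolution of $\nu|y|^{-\sigma}\mathbf 1_{\{|y|>r\}}$ with the Bessel kernel $G_\mu$ of $-\Delta+\mu$, and extract the leading term using $\int_{\R^N}G_\mu=1/\mu$ together with the exponential localization of $G_\mu$; the barrier argument above should be shorter.
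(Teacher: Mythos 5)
Your barrier argument is correct, and it is worth noting that the paper itself does not prove this lemma at all: it is imported verbatim as \cite[Lemma 6.7]{MV} from Marcus--V\'eron, so any self-contained verification is necessarily a ``different route.'' Your computation $-\Delta(|x|^{-a})=a(N-2-a)|x|^{-a-2}$ is right, and the key structural point — that the threshold radius $R_0$ beyond which $A\mu|x|^{-\sigma-\delta}$ absorbs the term $A(\sigma+\delta)(N-2-\sigma-\delta)|x|^{-\sigma-\delta-2}$ is independent of $A$ because $A$ cancels, so that $R_0$ can be fixed from $\mu,\sigma,\delta,N$ before $A$ is enlarged to handle both the $|x|^{-\sigma-2}$ correction and the ordering on $\partial B_{R_0}(0)$ — is exactly what makes the two requirements on $A$ compatible; the restriction $\delta\in(0,2)$ is used precisely where you say it is, in bounding $|x|^{\delta-2}\le R_0^{\delta-2}$. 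The maximum-principle step is also sound: $v=w_+-u$ is continuous, nonnegative on the inner boundary, tends to $0$ at infinity, so a negative infimum would be attained at an interior point where $-\Delta v\le 0$ and $\mu v<0$ contradict the supersolution inequality (and symmetrically for $w_-$). What your approach buys is an elementary, fully local proof adapted to the exact form $-\Delta+\mu$ with right-hand side $\nu|x|^{-\sigma}$; the cited Marcus--V\'eron lemma (and your sketched alternative via the Bessel kernel $G_\mu$ with $\int G_\mu=1/\mu$, which is the mechanism behind the decay asymptotics in \cite{MVS}) is stated in greater generality but requires importing the kernel representation. For the purposes of this paper, where the lemma is only applied to sub- and supersolutions of such model equations via the comparison principle, your barrier proof is entirely adequate and arguably more transparent.
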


\noindent {\bf Proof of (\ref{1.8}) and (\ref{1.10}) in Theorem \ref{teo 2}. }
  (\ref{1.10}) follows by (\ref{d 3}). Next we prove (\ref{1.8}).

{\it Step 1.} From Lemma \ref{lm 4.3}, we have that
$$u_k(x)\le k|x|^{-\frac{N-\alpha}{1-q}}\quad{\rm for}\quad |x|>r_0.$$
From $(1-\frac\alpha N)p+q>1$, we have that
$\frac{N-\alpha}{1-q}p>N$ and
$$u_k^p(x)\le k^p |x|^{-\frac{N-\alpha}{1-q}p} \le k^p |x|^{-N} \quad{\rm for}\quad |x|>r_0.$$
Moreover, $u_k^p\in L^1(\R^N)$ by Lemma \ref{lm 4.3}.
From Proposition \ref{pr 4.1} with $\gamma=\frac{\frac{N-\alpha}{1-q}p-N}{1+\frac{N-\alpha}{1-q}p-N}$, there exists $c_{15}>0$ such that
\begin{equation}\label{5.2}
\norm{u_k^p}_{L^1(\R^N)}I_\alpha(x)- \frac{c_{15} }{|x|^{N-\alpha+\gamma}}\le  I_\alpha[u_k^p](x)\le \norm{u_k^p}_{L^1(\R^N)}I_\alpha(x)+ \frac{ c_{15}}{|x|^{N-\alpha+\gamma}},\quad |x|>r.
\end{equation}
Let $w=u_k^{1-q}$, then
$$-\Delta w+(1-q)w\ge (1-q)\norm{u_k^p}_{L^1(\R^N)} I_\alpha(x)- c_{16} \frac1{|x|^{N-\alpha+\gamma}}, $$
Then by Lemma \ref{lm 4.4} and Comparison Principle for $-\Delta +(1-q)$, we obtain that
$$ \liminf_{|x|\to+\infty} w(x)|x|^{N-\alpha}\ge \norm{u_k^p}_{L^1(\R^N)}. $$
Then
$$\liminf_{|x|\to+\infty} u_k(x)|x|^{\frac{N-\alpha}{1-q}}\ge \norm{u_k^p}^{\frac1{1-q}}_{L^1(\R^N)}.$$

{\it Step 2.} From the Young's inequality, we have that
$$I[u_k^p]u_k^q\le (1-q)I[u_k^p]^{\frac{1}{1-q}}+ qu_k$$
and then
\begin{eqnarray*}
-\Delta u_k+(1-q)u_k &\le & (1-q)I[u_k^p]^{\frac{1}{1-q}} \\
   &\le & (1-q)\norm{u_k^p}^{\frac{1}{1-q}}_{L^1(\R^N)}|x|^{-\frac{N-\alpha}{1-q}}(x)+ c_{17}|x|^{-\frac{N-\alpha+\gamma}{1-q}},
\end{eqnarray*}
then by Lemma \ref{lm 4.4} and comparison principle for $-\Delta +(1-q)$, we have that
$$ \limsup_{|x|\to+\infty} u_k(x)|x|^{N-\alpha}\le \norm{u_k^p}^{\frac{1}{1-q}}_{L^1(\R^N)}.$$

Therefore, we conclude that
$$ \lim_{|x|\to+\infty} u_k(x)|x|^{N-\alpha}= \norm{u_k^p}^{\frac{1}{1-q}}_{L^1(\R^N)}$$
and this completes the proof of Theorem \ref{teo 2}.
\hfill$\Box$\smallskip
\medskip

\bigskip

\noindent\thanks{Acknowledgements: H. Chen  is   partially supported by NNSF of China, No:11401270,  by SRF for ROCS, SEM and by the Jiangxi Provincial Natural Science Foundation, No: 20161ACB20007.
 F. Zhou is partially supported by NSFC (11271133 and 11431005)
and Shanghai Key Laboratory of PMMP.}

\end{document}